\newtheorem{thm}{Theorem}
\newtheorem{lem}[thm]{Lemma}
\newtheorem{cor}[thm]{Corollary}
\newtheorem{question}[thm]{Question}
\newcommand{\x}{\,\,}
\newcommand{\ignore}[1]{{}}
\newcommand{\vol}{\mathrm{vol}}
\newcommand{\calP}{\mathcal{P}}
\newcommand{\calQ}{\mathcal{Q}}
\newcommand{\rarr}{\rightarrow}
\newcommand{\floor}[1]{\lfloor{#1}\rfloor}
\newcommand{\IR}{\mathbb{R}}
\newcommand{\ZZ}{\mathbb{Z}}
\newcommand{\ph}{\phantom}
\title{\vspace{-5mm}Cubic graphs, their Ehrhart quasi-polynomials, \\
and a scissors congruence phenomenon\thanks{FAPESP 13/03447-6 and 15/10323-7, CNPq 456792/2014-7 and 452507/2016-2, CAPES PROEX.}}
\author{\hspace{1cm} Cristina G.~Fernandes
	\thanks{Instituto de Matem\'atica e Estat\'istica, Universidade de S\~ao Paulo, 05508-090 S\~ao Paulo, Brazil.
	(\texttt{cris@ime.usp.br}, \texttt{coelho@ime.usp.br}, \texttt{srobins@ime.usp.br}).}
\and Jos\'e C.~de~Pina\footnotemark[2] \hspace{1cm}
\and Jorge~Luis Ram\'irez~Alfons\'in 
        \thanks{IMAG, Univ. Montpellier, CNRS, Montpellier, France. 
        (\texttt{jorge.ramirez-alfonsin@umontpellier.fr}).} 
\and Sinai Robins\footnotemark[2]
}
\begin{document}
%\linenumbers

\maketitle

\begin{abstract}
  The scissors congruence conjecture for the unimodular group 
  is an analogue of Hilbert's third problem, for the equidecomposability of polytopes.
  %%%%%%%%%%%%%%%%%%%%%%%%%%%%%%%%%%%%%%%%%%%%%%%%%%%%%%%%%%%%%%%%%%%%%%%%
  Liu and Osserman studied the Ehrhart quasi-polynomials of polytopes
  naturally associated to graphs whose vertices have degree one or three.
  %%%%%%%%%%%%%%%%%%%%%%%%%%%%%%%%%%%%%%%%%%%%%%%%%%%%%%%%%%%%%%%%%%%%%%%%
  In this paper, we prove the scissors congruence conjecture, posed by Haase and McAllister, 
  for this class of polytopes. 
  The key ingredient in the proofs is the nearest neighbor interchange on graphs
  and a naturally arising piecewise unimodular transformation.
  %%%%%%%%%%%%%%%%%%%%%%%%%%%%%%%%%%%%%%%%%%%%%%%%%%%%%%%%%%%%%%%%%%%%%%%%
  % For the proof, we provide a generalization of a NNI moves to connected 
  % graphs with the same degree sequence, which might be of independent interest. 
\end{abstract}

% \begin{keywords}
%   Ehrhart polynomials, nearest neighbor interchange
% \end{keywords}

\section{Introduction}\label{sec:intro}

A \emph{cubic graph} is a graph whose vertices have degree three and a
\emph{$\{1,3\}$-graph} is a graph whose vertices have degree either one or three.
The graphs are allowed to have loops and parallel edges.
%%%%%%%%%%%%%%%%%%%%%%%%%%%%%%%%%%%%%%%%%%%%%%%%%%%%%%%%%
Motivated by a result of Mochizuki~\cite{Mochizuki1996}, 
Liu and Osserman~\cite{LiuO2006} associated a polytope $\calP_G$ 
to each $\{1,3\}$-graph $G$ and studied its 
Ehrhart quasi-polynomial. 

%%%%%%%%%%%%%%%%%%%%%%%%%%%%%%%%%%%%%%%%%%%%%%%%%
For each degree three vertex $v$ of a $\{1,3\}$-graph $G=(V,E)$, 
let $a$,~$b$, and $c$ be the three edges incident to $v$.  
Denote by~$S(v)$ the linear system consisting of a perimeter inequality 
and three metric inequalities defined on the variables $w_a$, $w_b$, 
and $w_c$ as follows:
\[  w_a + w_b + w_c \leq 1 \] \vspace{-12mm}
\begin{eqnarray*}
  w_a & \leq & w_b + w_c \\
  w_b & \leq & w_a + w_c \\
  w_c & \leq & w_a + w_b.
\end{eqnarray*}
From the three metric inequalities, one can immediately conclude that 
$w_a$, $w_b$, and $w_c$ are nonnegative.  
%%%%%%%%%%%%%%%%%%%%%%%%%%%%%%%%%%%%%%%%%%%%%%%%%%%%%%%%%%%
Consider the union of all the linear systems $S(v)$, 
taken over all degree three vertices $v$ of~$G$.
%%%%%%%%%%%%%%%%%%%%%%%%%%%%%%%%%%%%%%%%%%%%%%%%%%%%%%%%%%%%
The polytope $\calP_G$ is defined by the set of 
all real solutions for this linear system.

%%%%%%%%%%%%%%%%%%%%%%%%%%%%%%%%%%%%%%%%%%%%%%%%%%%%%%%%%%%
Let ${E = \{1,\ldots,m\}}$ and  $w: E \rightarrow \IR$ be a
weight function defined on the edges of~$G$.
We use the vector notation $w = (w_{1},\ldots,w_{m}) \in \IR^m$.
%%%%%%%%%%%%%%%%%%%%%%%%%%%%%%%%%%%%%%%%%%%%%%%%%%%%%%%%%%%
In particular, when~$w$ is a solution for the linear system
defining~$\calP_G$, we write $w \in \calP_G$.

%%%%%%%%%%%%%%%%%%%%%%%%%%%%%%%%%%%%%%%%%%%%%%%%%%%%%%%%%%%%%%%%%%
Given a rational polytope~$\calP$, Eugène Ehrhart defined the function 
$L_{\calP}(t) := |t\calP \cap {\mathbb Z}^m|$, which is the 
number of lattice points in the closed \emph{dilated polytope} $t\calP$, 
for a nonnegative integer parameter $t$.
Ehrhart showed that this function is a polynomial in~$t$ when~$\calP$
is an integral polytope.  More generally, if~$\calP$ is a rational
polytope, the function $L_{\calP}(t)$
is a quasi-polynomial whose period is closely related
to the denominators appearing in the coordinates of the vertices 
of $\calP$~\cite{ehrhartbook, niceperson}.

%%%%%%%%%%%%%%%%%%%%%%%%%%%%%%%%%%%%%%%%%%%%%%%%%%%%%%%%%%%
Liu and Osserman conjectured (\cite[Conj.~4.2]{LiuO2006}) 
that polytopes associated to connected $\{1,3\}$-graphs with the
same number of vertices and edges have the \emph{same} Ehrhart
quasi-polynomial.
%%%%%%%%%%%%%%%%%%%%%%%%%%%%%%%%%%%%%%%%%%%%%%%%%%%%%%%%%
They partially proved their conjecture, by showing that these 
quasi-polynomials coincide for all nonnegative odd values of the dilation parameter~$t$.
%%%%%%%%%%%%%%%%%%%%%%%%%%%%%%%%%%%%%%%%%%%%%%%%%%%%%%%%%%%%%%%%%%%%%%%%
In 2013, Wakabayashi~\cite[Thm.~A(ii)]{Wakabayashi2013} proved their conjecture.
%%%%%%%%%%%%%%%%%%%%%%%%%%%%%%%%%%%%%%%%%%%%%%%
% using topological arguments.
%%%%%%%%%%%%%%%%%%%%%%%%%%%%%%%%%%%%%%%%%%%%%%%

%%%%% NNI stuff %%%%%%%%%%%%%%%%%%%%%%%%%%%%%%%%%%%%%%%%%%%%%%%%%%%%
An ingredient in Wakabayashi's proof for Liu and Osserman's conjecture is a
local transformation performed in $\{1,3\}$-graphs.
%%%%%%%%%%%%%%%%%%%%%%%%%%%%%%%%%%%%%%%%%%%%%%%%%%%%%%%%%%%%%%%%%%%%
This transformation is called an $A$-move by Wakabayashi and it is also known as
a {\em nearest neighbor interchange} (NNI).
%%%%%%%%%%%%%%%%%%%%%%%%%%%%%%%%%%%%%%%%%%%%%%%%%%%%%%%%%%%%%%%%%%%%
The NNI has been studied mainly for binary trees~\cite{CulikW1982, Robinson1971},
cubic graphs~\cite{Tsukui1996}, 
and $\{1,3\}$-graphs~\cite{Wakabayashi2013}. 
%%%%%%%%%%%%%%%%%%%%%%%%%%%%%%%%%%%%%%%%%%%%%%%%%%%%%%%%%%%%%%%%%%%%
We present the following general result for connected graphs with the
same degree sequence, which might of interest on its own.
%%%%%%%%%%%%%%%%%%%%%%%%%%%%%%%%%%%%%%%%%%%%%%%%%%%%%%%%%%%%%%%
We refer to a vertex of degree one in a graph simply as a \emph{leaf}. 
An edge is \emph{external} if it is incident to a leaf, otherwise it is \emph{internal}. 

\begin{thm}\label{thm:connected}
  Let $G$ and $G'$ be connected graphs with the same degree sequence and the same
  set of external edges. Then
  \begin{enumerate}[(a)]
  \item $G$ can be transformed into $G'$ through a series of NNI moves. \label{thm:connected:a}
  \item One can choose a spanning tree in $G$ and a spanning tree in $G'$ 
    and require that all the pivots of the NNI moves are internal edges of both
    of these spanning
    trees.~\label{thm:connected:b}
  \end{enumerate}  
\end{thm}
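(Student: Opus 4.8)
The plan is to exploit that an NNI is a very restricted \emph{$2$-switch}: the move with pivot $uv$ deletes two edges $ua$ and $vc$ (with $a,c\notin\{u,v\}$) and inserts $uc$ and $va$, leaving the pivot $uv$ in place. Read this way, an NNI is exactly a $2$-switch on a pair of edges that are \emph{adjacent}, namely incident to the two ends of a common edge. Each such move preserves the degree sequence, and it preserves the set of external edges provided we only ever take $uv$ and the two swapped edges to be internal, so that no leaf is relocated; I will only use such moves. Finally, an NNI is undone by the NNI with the same pivot $uv$ swapping $uc$ and $va$ back, so NNI-equivalence (through connected graphs with the prescribed external edges) is an equivalence relation. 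Consequently, for part~(\ref{thm:connected:a}) it suffices to transform an arbitrary such $G$ into one fixed canonical representative determined by the degree sequence and the external edges, and then run the reduction of $G'$ backwards.

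First I would record the building block. If $uv\in E$, then deleting $ua,vc$ and inserting $uc,va$ is a \emph{single} NNI; the crux is to promote this to an \emph{arbitrary} connectivity-preserving $2$-switch between internal edges whose endpoints happen not to be adjacent. Here I use connectivity: an NNI with pivot $uv$ interchanges the two branches hanging off $u$ and off $v$, so a chain of such moves along a path of internal edges transports a branch from one vertex to any other vertex reachable inside the internal graph. Thinking of the branches as tokens sitting at the internal vertices, a pivoted swap is a transposition of two tokens across an edge, and since the internal graph is connected these adjacent transpositions generate every rearrangement of the tokens; any target differing from $G$ only by such a rearrangement is then reachable. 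The delicate point—and the main obstacle—is that a branch is only cleanly defined when the pivot is a cut edge, whereas the non-tree edges create cycles. I would resolve this by always choosing the transporting path inside a spanning tree, carrying the extra edges along, bringing the relevant endpoints together, performing the local adjacent switch, and then reversing the transport; reversibility together with the fact that disjoint swaps commute makes the incidental moves cancel, leaving exactly the intended switch, and routing transport through a spanning tree guarantees no step disconnects the graph and no external edge is touched.

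With this building block, part~(\ref{thm:connected:a}) follows by induction on the number of internal edges in which $G$ and $G'$ differ. If this number is zero we are done; otherwise pick an internal edge $xy\in E(G')\setminus E(G)$ and use the building block to install $xy$ in $G$ by a $2$-switch that removes only edges \emph{not} shared with $G'$. Such a switch exists by a standard degree-counting argument: since $\deg_G(x)=\deg_{G'}(x)$ and $xy\notin E(G)$, suitable non-shared neighbours of $x$ and of $y$ can be paired off to make room for $xy$, and connectivity can be retained by a Berge-type choice of the pair. Installing $xy$ strictly decreases the discrepancy while preserving every already-matched edge, and iterating drives $G$ to $G'$.

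For part~(\ref{thm:connected:b}) I would make the spanning trees explicit rather than auxiliary. The reduction of $G$ above already uses only pivots lying on a chosen spanning tree $T\subseteq G$, and by construction these are internal edges of $T$; the symmetric reduction of $G'$ produces a spanning tree $T'\subseteq G'$ with the same property, and concatenating the first sequence with the reverse of the second transforms $G$ into $G'$. The genuine difficulty is \emph{simultaneity}: each pivot actually used must be an internal edge of \emph{both} $T$ and $T'$ at the moment it is applied. I expect this to be the hardest part, and I would address it by choosing $T$ and $T'$ as the images of one and the same canonical spanning tree under the two reductions. Because every move used is, by the second paragraph, a branch interchange across a tree edge, it sends tree edges to tree edges; hence a pivot internal in the canonical tree stays internal in both $T$ and $T'$ when transported back, which is precisely what part~(\ref{thm:connected:b}) demands.
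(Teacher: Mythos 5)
Your proposal has a genuine gap at its core: the ``building block'' that promotes a single NNI to an arbitrary connectivity-preserving $2$-switch is never actually established, and you yourself flag the obstruction without resolving it. The branch-transport picture (tokens at vertices, adjacent transpositions across edges) is only meaningful when the pivot is a cut edge, so that ``the branch hanging off $u$'' is well defined; for a pivot lying on a cycle an NNI does not interchange two branches, and routing the transport inside a spanning tree does not repair this, because an edge of a spanning tree of $G$ need not be a cut edge of $G$ when $G$ has cycles. The phrases ``carrying the extra edges along'' and ``the incidental moves cancel'' are exactly where a proof would have to live, and no NNI sequence realizing them is exhibited. The paper's proof avoids this problem by an entirely different device: it cuts an edge lying on a cycle into two pendant (hence external) edges, which drops the dimension of the cycle space by one, inducts down to the tree case, proves the tree case via caterpillar normal forms (insert off-path vertices into a longest path, sort the spine, sort the external edges), and then observes that gluing the pendant pair back turns the NNI sequence for the cut graphs into one for $G$ and $G'$. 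Your outline contains no substitute for this reduction, and without one the transposition argument does not get off the ground.

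There are two further problems. First, your discrepancy induction for part (a) assumes a vertex identification with $\deg_G(x)=\deg_{G'}(x)$ and measures progress by $E(G')\setminus E(G)$ viewed as vertex pairs; the hypothesis only gives equality of the \emph{sorted} degree sequences (no vertex correspondence is supplied), and the paper explicitly allows loops and parallel edges, so the ``standard degree-counting'' existence of a suitable $2$-switch and the Berge/Taylor-type connectivity-preserving switch theorems, which are statements about simple graphs, do not apply as cited. Second, part (b) rests on the assertion that each move ``sends tree edges to tree edges,'' which is false in general (an NNI with pivot in the tree may swap a tree-edge end with a non-tree-edge end), and even granting it you never verify that the pivots actually used are internal edges of \emph{both} fixed trees $T\subseteq G$ and $T'\subseteq G'$ simultaneously. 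In the paper, (b) comes almost for free from the induction: at each step the cut edge is chosen outside both spanning trees, and in the tree base case the graphs \emph{are} their spanning trees, where every pivot automatically qualifies (both ends of a pivot have degree at least two, so pivots are always internal). As written, your argument for (b) is a sketch of an intention, not a proof.
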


In particular, for the special case of $\{1,3\}$-graphs, the proof provided
for Theorem~\ref{thm:connected}(a) constitutes an alternative graph theoretic proof
of a proposition by Wakabayashi~\cite[Prop.~6.2]{Wakabayashi2013}.

%%%%% scissors congruence stuff %%%%%%%%%%%%%%%%%%%%%%%%%%%%%%%%%%%%%%%%%%%%%%%%%%%%
One of the concerns of this paper is on the scissors congruence conjecture for the unimodular group, 
which is an analogue of Hilbert's third problem (equidecomposability).
Concretely, this was stated as the following question by Haase and McAllister~\cite{haase2008}.
An integral matrix~$U$ is~\emph{unimodular} if it has  determinant~$\pm 1$.
An \emph{affine unimodular transformation} 
is defined by $x \rarr Ux+b$, where $U$ is a unimodular matrix and $b$ is a real vector. 

\begin{question}\!\!\textnormal{\cite[Question~4.1]{haase2008}}
  \label{qst:haase}
  Suppose that $\calP$ and $\calP'$ are polytopes with the same Ehrhart quasi-polynomial.
  Is it true that there is
  %%%%%%%%%%%%%%%%%%%%%%%%%%%%%%%%%%%%%%%%%%%%%%%%%%%%%%%%%%%%%%%%%%%
  a decomposition of~$\calP$ into relatively open simplices $\calP^1,\ldots,\calP^k$
  %%%%%%%%%%%%%%%%%%%%%%%%%%%%%%%%%%%%%%%%%%%%%%%%%%%%%%%%%%%%%%%%%
  and affine unimodular transformations $U^1,\ldots,U^k$
  %%%%%%%%%%%%%%%%%%%%%%%%%%%%%%%%%%%%%%%%%%%%%%%%%%%%%%%%%%%%%%
  such that $\calP'$ is the disjoint union of $U^1(\calP^1),\ldots,U^r(\calP^r)$?
\end{question}

We show that for polytopes associated to $\{1,3\}$-graphs such a scissors congruence decomposition holds. 
Namely, we have the following.

\begin{thm}\label{thm:decomposition}
  Let $G$ and $G'$ be two connected $\{1,3\}$-graphs with the same number of vertices and edges.
  Then there is
  %%%%%%%%%%%%%%%%%%%%%%%%%%%%%%%%%%%%%%%%%%%%%%%%%%%%%%%%%%%%%%%%%%%
  a dissection of $\calP_G$ into smaller polytopes $\calP_G^1,\ldots,\calP_G^k$
  %%%%%%%%%%%%%%%%%%%%%%%%%%%%%%%%%%%%%%%%%%%%%%%%%%%%%%%%%%%%%%%%%
  and affine unimodular transformations $U^1,\ldots,U^k$
  %%%%%%%%%%%%%%%%%%%%%%%%%%%%%%%%%%%%%%%%%%%%%%%%%%%%%%%%%%%%%%%
  such that $\calP_{G'}$ is the \ignore{disjoint} union of $U^1(\calQ_G^1),\ldots,U^k(\calQ_G^k)$.
\end{thm}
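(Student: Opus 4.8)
The plan is to reduce the statement to a single NNI move and then exhibit, for one such move, an explicit piecewise unimodular map between the two polytopes. First I would note that two connected $\{1,3\}$-graphs with the same number $n$ of vertices and $m$ of edges automatically have the same degree sequence: writing $n_1,n_3$ for the numbers of leaves and degree-three vertices, the pair $(n,m)$ determines $(n_1,n_3)$ through $n_1+n_3=n$ and $n_1+3n_3=2m$. In particular $G$ and $G'$ have the same number of external edges. Relabelling the edges of $G'$ by a suitable permutation $\pi$ of $\{1,\dots,m\}$ — which acts on $\IR^m$ as a permutation matrix, hence as a single-piece unimodular transformation carrying $\calP_{G'}$ onto $\calP_{\pi(G')}$ — I may assume that $G$ and $\pi(G')$ have the same set of external edges. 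Theorem~\ref{thm:connected} then supplies a sequence of NNI moves taking $G$ to $\pi(G')$, whose pivots are internal edges. Since equidecomposability under the unimodular group is transitive (compose the maps on a common refinement of the two dissections), it suffices to treat the case where $G'$ arises from $G$ by a single NNI move.

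So fix an internal pivot $e=uv$, let $a,b$ be the other edges at $u$ and $c,d$ the other edges at $v$ in $G$, and suppose the move reconnects them so that in $G'$ the vertex $u$ carries $a,c,e$ and $v$ carries $b,d,e$. The crucial structural remark is that every constraint defining $\calP_G$ \emph{other} than those in $S(u)\cup S(v)$ is identical to the corresponding constraint of $\calP_{G'}$, since the edges $a,b,c,d$ keep their other endpoints; moreover none of these shared constraints involves $w_e$, because $e$ is internal with both endpoints in $\{u,v\}$. Fixing all coordinates except $w_e$, the feasible values of $w_e$ form an interval $[L_G,U_G]$ with $L_G=\max(|w_a-w_b|,|w_c-w_d|)$ and $U_G=\min(w_a+w_b,\,1-w_a-w_b,\,w_c+w_d,\,1-w_c-w_d)$, and likewise an interval $[L_{G'},U_{G'}]$ for $\calP_{G'}$, obtained by pairing $(a,c)$ and $(b,d)$ in place of $(a,b)$ and $(c,d)$.

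The heart of the argument is the identity
\[
  U_G - L_G \;=\; U_{G'} - L_{G'}
\]
for all $(w_a,w_b,w_c,w_d)$: the feasible $w_e$-fibres of $\calP_G$ and $\calP_{G'}$ have equal length (so one is empty exactly when the other is). Granting this, define $\Phi$ to fix every coordinate except $w_e$ and to send $w_e\mapsto w_e+(L_{G'}-L_G)$; this carries the $G$-fibre $[L_G,U_G]$ onto the $G'$-fibre $[L_{G'},U_{G'}]$, so $\Phi$ is a bijection $\calP_G\to\calP_{G'}$. Each of $L_G$ and $L_{G'}$ is a maximum of signed differences of the $w_i$, so on each cell of the polyhedral subdivision of $\calP_G$ cut out by which term is active in $L_G$ and in $L_{G'}$, the displacement $L_{G'}-L_G$ is an integer linear form in $(w_a,w_b,w_c,w_d)$ with no constant term. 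On such a cell $\Phi$ is a shear by an integer matrix of determinant one, and globally $\Phi$ is continuous and piecewise unimodular — exactly the map promised in the abstract. Taking the cells as the pieces $\calP_G^1,\dots,\calP_G^k$ and the corresponding shears as $U^1,\dots,U^k$, their images tile $\calP_{G'}$; since each $U^i\in SL_m(\ZZ)$ preserves $\ZZ^m$, the dissection witnesses the scissors congruence and hence the equality of Ehrhart quasi-polynomials in the sense of Question~\ref{qst:haase}.

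The main obstacle is precisely the length identity $U_G-L_G=U_{G'}-L_{G'}$. I expect to prove it by writing each $\min$ and $\max$ as $\tfrac12(\text{sum}\mp|\text{difference}|)$ and splitting into regimes according to whether $w_x+w_y$ or $1-w_x-w_y$ realizes each upper bound; in the two pure regimes the identity collapses to the elementary fact $2\max(|A|,|B|)-|A+B|=|A-B|$ together with the relation $(w_a-w_b)-(w_c-w_d)=(w_a-w_c)-(w_b-w_d)$, and the mixed regimes are checked in the same spirit. A secondary, bookkeeping obstacle is the family of degenerate NNI configurations — parallel edges or a loop at $u$ or $v$, or one of $a,b,c,d$ being external — in which some of the five local variables coincide or some local constraints are absent; each such case merely collapses the local picture to fewer variables and is handled by the same fibrewise shear, so I would dispatch them by a short separate inspection rather than inside the main construction.
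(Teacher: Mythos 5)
Your proposal is correct, and its skeleton matches the paper's: reduce via Theorem~\ref{thm:connected} to a chain of NNI moves, show that each weighted NNI acts on $\IR^m$ as a continuous piecewise unimodular shear in the single coordinate $w_e$, and compose the per-move dissections (your appeal to transitivity ``on a common refinement'' is exactly the pullback bookkeeping the paper writes out explicitly for two consecutive moves, producing the pieces $\calP^{11},\calP^{12},\calP^{21},\calP^{22}$). The genuine divergence is in the local step. The paper never proves the bijection itself: it imports Wakabayashi's formula $w'_e=w_e+\max\{w_a+w_c,w_b+w_d\}-\max\{w_b+w_c,w_a+w_d\}$ as Lemma~\ref{lem:cone-involution}, expands the maxima into the four linear regimes \eqref{eq:uniA}--\eqref{eq:uniD}, checks each matrix is unimodular, and reads off the one or two cutting hyperplanes. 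You instead re-derive the bijection from scratch by the fibre argument, and your map is in fact the same map: in your labelling both shifts equal $\max(w_a+w_b,\,w_c+w_d)-\max(w_a+w_c,\,w_b+w_d)$. Your flagged ``main obstacle'', the identity $U_G-L_G=U_{G'}-L_{G'}$, is true and closes more cleanly than your $\tfrac12(\mathrm{sum}\mp|\mathrm{difference}|)$ case analysis suggests: with $s=w_a+w_b+w_c+w_d$ and $M_G=\max(w_a+w_b,\,w_c+w_d)$ one has $U_G+M_G=\min(s,1)$ and $L_G+M_G=\max(2w_a,2w_b,2w_c,2w_d,\,s-2w_a,s-2w_b,s-2w_c,s-2w_d)$, and both right-hand sides are symmetric in the four weights, hence independent of how the edges are paired at the two endpoints; subtracting gives the length identity and simultaneously $L_{G'}-L_G=M_G-M_{G'}$, a homogeneous integer linear form on each cell, as you need for dilation-equivariance and lattice preservation. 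What each route buys: the paper's is shorter because the hard content is outsourced to Wakabayashi~\cite{Wakabayashi2013}, and its four-case expansion makes the hyperplanes $w_a+w_b-w_c-w_d=0$ and $w_a-w_b-w_c+w_d=0$ explicit; yours is self-contained and gives a geometric reason for the bijection (equal-length $w_e$-fibres over a common projection, so nonemptiness matches automatically), and your opening permutation step --- aligning the external edge sets so that Theorem~\ref{thm:connected} applies to graphs that a priori share only the pair $(n,m)$ --- is more careful than the paper, which invokes that theorem silently. Your two deferred items are genuinely minor: the degenerate configurations are handled in the paper simply by allowing $a,b,c,d$ to repeat in the same formulas (your fibre bounds specialize identically, e.g.\ a loop at $u$ amounts to $c=a$), and external edges among $a,b,c,d$ cost nothing since your analysis only uses $S(u)\cup S(v)$ and the absent constraints live at the far endpoints, which $G$ and $G'$ share; note also that the endpoints of the pivot of a length-three trail in a $\{1,3\}$-graph automatically have degree three, so pivots are internal for free.
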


The proof of Theorem~\ref{thm:decomposition} relies on a piecewise unimodular transformation
associated to a weighted version of the NNI move. 

The rational polytope  $\calP_G$, associated to a $\{1,3\}$-graph $G$, enjoys some fascinating symmetry.
Linke~\cite{Linke2011} considered the extension of $L_{\calP}(t)$ for all nonnegative real numbers~$t$. 
Royer~\cite{Royer2017} defined a polytope to be \emph{semi-reflexive} if $L_{\calP}(s) = L_{\calP}(\floor{s})$ for every nonnegative real number~$s$.
One can verify that $\calP_G$ is semi-reflexive. 
A polytope is \emph{reflexive} if it is integral, the origin is in its interior, and it is semi-reflexive~\cite{niceperson}.
% A polytope is \emph{reflexive} if it is integral and has the hyperplane description $\{x \in \IR^d \ | \ Ax \le \mathbbm{1} \}$ for some integral matrix~$A$~\cite{niceperson}.
We prove the following.
%%%%%%%%%%%%%%%%%%%%%%%%%%%%%%%%%%%%%%%%%%%%%%%%%%%%%%%%%%%%%%%%%%%%%%%%%
%(see section \ref{reflexivity})
%%%%%%%%%%%%%%%%%%%%%%%%%%%%%%%%%%%%%%%%%%%%%%%%%%%%%%%%%%%%%%%%%%%%%%
%% that each of these polytopes , when dilated by a factor of $4$ and
%% translated by the vector $(1, 1, \dots, 1)$, is always a \emph{reflexive}
%% polytope.
%%%%%%%%%%%%%%%%%%%%%%%%%%%%%%%%%%%%%%%%%%%%%%%%%%%%%%%%%%%

\begin{thm}\label{thm:reflexive}
For each $\{1,3\}$-graph $G$, the polytope $4\calP_G{-}\mathbbm{1}$
is reflexive.
\end{thm}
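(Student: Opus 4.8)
The plan is to pass to the transformed coordinates $y = 4w - \mathbbm{1}$, under which $\calQ := 4\calP_G - \mathbbm{1}$ becomes an explicit intersection of halfspaces, and then to verify the three defining properties of reflexivity (integral, origin in the interior, semi-reflexive). First I would rewrite each defining inequality of $\calP_G$ in the variable $y$. Substituting $w_e = (y_e+1)/4$, a perimeter inequality $w_a+w_b+w_c \le 1$ at a degree three vertex $v$ turns into $y_a+y_b+y_c \le 1$, and a metric inequality $w_a \le w_b + w_c$ turns into $y_a - y_b - y_c \le 1$. Thus every facet of $\calQ$ has the form $\langle n_i, y\rangle \le 1$ with $n_i \in \{-1,0,1\}^m$ a primitive integer vector supported on the three edges at a vertex; in particular the polar dual $\calQ^{*} = \mathrm{conv}\{n_i\}$ is automatically a lattice polytope. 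Adding the perimeter inequality to the metric one gives $2y_a \le 2$, and adding the other two metric inequalities gives $-2y_a \le 2$, so $\calQ \subseteq [-1,1]^m$ whenever each edge meets a degree three vertex.

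Two of the three properties follow immediately from this description. Since $y = 0$ satisfies every inequality strictly, the origin lies in the interior of $\calQ$; equivalently $\mathbbm{1}/4$ lies in the interior of $\calP_G$, which also shows that $\calP_G$ is full dimensional. Semi-reflexivity is just as direct: a lattice point $p \in \ZZ^m$ lies in $s\calQ$ exactly when $\max_i \langle n_i, p\rangle \le s$, and since each $\langle n_i, p\rangle$ is an integer this threshold is crossed only at integer values of $s$, whence $L_{\calQ}(s) = L_{\calQ}(\floor{s})$ for every real $s \ge 0$. This reproves, for $\calQ$ itself, the semi-reflexivity already noted for $\calP_G$.

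The remaining and genuinely hard property is integrality: I must show that every vertex of $\calQ$ lies in $\ZZ^m$, equivalently that every vertex of $\calP_G$ lies in $\tfrac14\ZZ^m$. Combined with $\calQ \subseteq [-1,1]^m$, this is the statement that every vertex lies in $\{-1,0,1\}^m$. A vertex $y^{*}$ of $\calQ$ is the unique solution of a system $Ay = \mathbbm{1}$, where the rows of the $m\times m$ matrix $A$ form a linearly independent selection of the vectors $n_i$. Locally this is harmless: at one vertex the four inequalities cut out the lattice tetrahedron with vertices in $\{-1,1\}^3$, so a single degree three vertex contributes only coordinates in $\{0,\tfrac12\}$ to $\calP_G$. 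The difficulty is the global coupling through shared edges, where an odd cycle genuinely forces denominator $4$: for a triangle of degree three vertices each carrying one leaf edge, the symmetric vertex has all three cycle edges equal to $\tfrac14$. Hence the bound ``denominator divides $4$'' is tight and must be proved, not merely estimated, and this is the main obstacle.

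To establish integrality I would analyze the system $Ay=\mathbbm{1}$ combinatorially rather than through brute-force determinants. The key observation is that reducing $A$ modulo $2$ collapses all rows attached to a common vertex $v$ to the single vector $\sum_{e \ni v} y_e$, so the $2$-adic behaviour of $A$ is controlled by how the chosen tight constraints distribute over the vertices and over the cycles of $G$. The goal is to show that $\mathbbm{1}$ always lies in the integer column lattice $A\ZZ^m$, i.e. that $Az = \mathbbm{1}$ admits an integral solution $z$; by uniqueness $y^{*} = z \in \ZZ^m$. I expect to produce such a $z \in \{-1,0,1\}^m$ explicitly from the combinatorial pattern of tight constraints (recording which edge is ``long'' at each vertex), with one parity factor $2$ coming from each perimeter/metric pairing and a single additional factor $2$ arising from odd cycles, together with an argument that no further accumulation occurs. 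Once this integrality step is in place, $\calQ$ is a lattice polytope with the origin in its interior whose facets all lie at lattice distance $1$, and the three verified properties give that $4\calP_G - \mathbbm{1}$ is reflexive.
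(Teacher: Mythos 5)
Your reduction to the facet description, and the verifications that the origin is interior and that $\calQ := 4\calP_G - \mathbbm{1}$ is semi-reflexive, are correct and coincide with the paper's route: the paper likewise rewrites the perimeter and metric inequalities at each degree three vertex as $w_a + w_b + w_c \le 1$, $w_a - w_b - w_c \le 1$, $-w_a + w_b - w_c \le 1$, $-w_a - w_b + w_c \le 1$, i.e.\ as $Ax \le \mathbbm{1}$ with $A$ integral. But your proposal has a genuine gap at exactly the point you yourself flag as ``the main obstacle'': the integrality of $\calQ$, equivalently that every vertex of $\calP_G$ lies in $\tfrac14\ZZ^m$, is never established. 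What you offer is a program --- reduce $A$ modulo $2$, hope to exhibit an integral $z \in \{-1,0,1\}^m$ with $Az = \mathbbm{1}$, with ``one parity factor $2$ from each perimeter/metric pairing and a single additional factor $2$ from odd cycles, together with an argument that no further accumulation occurs'' --- but no such $z$ is constructed and no bound on the accumulation of factors of $2$ is proved. The observation that all rows attached to a vertex $v$ collapse mod $2$ to the same vector (the indicator of the three edges at $v$) is true, but by itself it only says $A$ is highly singular mod $2$, which cuts both ways: it is perfectly compatible with large $2$-power denominators and does not yield $\mathbbm{1} \in A\ZZ^m$. Since this integrality is the entire content of the theorem once the facet description is written down (the other two properties are one-line checks, and semi-reflexivity is automatic for any description $Ax \le \mathbbm{1}$ with integral $A$), the proposal as it stands does not prove the statement.

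Note that the paper does not prove integrality from scratch either: it invokes Liu and Osserman's Proposition~3.5, which asserts that $4\calP_G$ is an integral polytope; translating by $-\mathbbm{1}$ preserves integrality, and the theorem then follows from the equivalent characterization of reflexive polytopes as integral polytopes admitting a description $Ax \le \mathbbm{1}$ with $A$ integral. So to complete your argument you should either cite that result or genuinely carry out your lattice-theoretic plan. Be aware that your own triangle example shows the denominator bound $4$ is attained, so any completion must exploit the specific combinatorial structure of the tight-constraint pattern (as Liu and Osserman's vertex analysis does); a generic determinant or parity estimate cannot succeed, since nothing in your mod-$2$ setup so far distinguishes denominator $4$ from denominator $8$.
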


The paper is organized as follows.
%%%%%%%%%%%%%%%%%%%%%%%%%%%%%%%%%%%%%%%%%%%%%%%%%%%%%%%%%%%%%%%%%%%
Section~\ref{sec:nni} contains the results involving the NNI move in graphs
with the same degree sequence, including Theorem~\ref{thm:connected}, while 
%%%%%%%%%%%%%%%%%%%%%%%%%%%%%%%%%%%%%%%%%%%%%%%%%%%%%%%%%%%%%%%%%%%%
Section~\ref{sec:wnni} discusses the extension of the NNI move to weighted graphs.
%%%%%%%%%%%%%%%%%%%%%%%%%%%%%%%%%%%%%%%%%%%%%%%%%%%%%%%%%%%%%%%%%%%
Section~\ref{sec:decomp} describes the unimodular decomposition of~$\calP_G$ 
and presents the proof of Theorem~\ref{thm:decomposition}.
%%%%%%%%%%%%%%%%%%%%%%%%%%%%%%%%%%%%%%%%%%%%%%%%%%%%%%%%%%%%%%%%%%%%
In Section~\ref{sec:reflexivity}, we prove Theorem~\ref{thm:reflexive}.
%%%%%%%%%%%%%%%%%%%%%%%%%%%%%%%%%%%%%%%%%%%%%%%%%%%%%%%%%%%%%%%%%%%%
Finally, Section~\ref{sec:remarks} contains some concluding remarks.

% \bigskip

%%% setting the theorem counter back to zero (coelho)
%\setcounter{thm}{0}
%%%%%%%%%%%%%%%%%%%%%%%%%%%%%
%%%%%%%%% Section
%%%%%%%%%%%%%%%%%%%%%%%%%%%%%

\section{Nearest neighbor interchange}\label{sec:nni}

With an eye towards Section~\ref{sec:wnni}, where we consider weighted graphs, 
we think of a graph as defined by Bondy and Murty~\cite{BondyM2008}.
A graph $G$ is an ordered pair $(V,E)$ consisting of a set~$V$ 
of vertices and a set $E$, disjoint from $V$, of edges, together
with an \emph{incidence function} $\psi_G$ that associates with each edge 
of $G$ an unordered pair of (not necessarily distinct) vertices of $G$.  
The \emph{degree sequence} of $G$ is the monotonic nonincreasing sequence 
consisting of the degrees of its vertices. 

A \emph{nearest neighbor interchange (NNI)} is a local move performed 
in $G$ on a trail~$W$ of length three.  
This move interchanges one end of the two extreme edges of~$W$ 
on the central edge (Figure~\ref{fig:nni}). 
We refer to the central edge of $W$ as the \emph{pivot} of the NNI move.
% If $u$ and $v$ are the internal vertices of $W$, 
% this move interchanges a neighbor of $u$ and a neighbor of $v$.
The result of the move is another graph~$G'$ on the same number of 
connected components, with the same degree sequence.  
We consider the graph $G'$ as having the same set of vertices and edges, 
that is, ${G'= (V,E)}$, and only the incidence function $\psi_{G'}$ is adjusted 
accordingly.  Intuitively, one can think of the edges as sticks that are being moved from~$G$ to $G'$. 

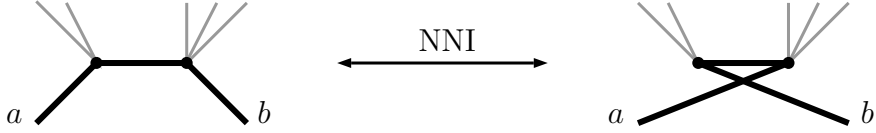
\begin{figure}[h]
  \centering
  % \usepackage[usenames,dvipsnames]{pstricks}
% \usepackage{epsfig}
% \usepackage{pst-grad} % For gradients
% \usepackage{pst-plot} % For axes
% \usepackage[space]{grffile} % For spaces in paths
% \usepackage{etoolbox} % For spaces in paths
% \makeatletter % For spaces in paths
% \patchcmd\Gread@eps{\@inputcheck#1 }{\@inputcheck"#1"\relax}{}{}
% \makeatother
% \psscalebox{1.0 1.0} % Change this value to rescale the drawing.
{
\begin{pspicture}(0,-0.85503983)(12.02,0.85503983)
\definecolor{colour0}{rgb}{0.6,0.6,0.6}
\psline[linecolor=black, linewidth=0.04, arrowsize=0.05291667cm 2.0,arrowlength=1.4,arrowinset=0.0]{<->}(4.4,0.020897675)(7.2,0.020897675)
\rput[bl](5.48,0.18089767){NNI}
\psline[linecolor=colour0, linewidth=0.04](0.4,0.8408977)(1.2,0.040897675)(0.8,0.8208977)
\psline[linecolor=colour0, linewidth=0.04](2.4,0.040897675)(2.8,0.8208977)
\psline[linecolor=colour0, linewidth=0.04](9.2,0.020897675)(8.4,0.8208977)
\psdots[linecolor=black, dotsize=0.16](1.2,0.020897675)
\psline[linecolor=colour0, linewidth=0.04](2.4,0.8208977)(2.4,0.020897675)(3.2,0.8208977)
\psdots[linecolor=black, dotsize=0.16](2.4,0.020897675)
\psline[linecolor=black, linewidth=0.08](0.4,-0.7791023)(1.2,0.020897675)(2.4,0.020897675)(3.2,-0.7791023)
\psline[linecolor=black, linewidth=0.08](8.4,-0.7791023)(10.4,0.020897675)
\psline[linecolor=black, linewidth=0.08](9.2,0.020897675)(11.2,-0.7791023)
\psline[linecolor=black, linewidth=0.08](9.2,0.020897675)(10.4,0.020897675)
\psline[linecolor=colour0, linewidth=0.04](9.2,0.020897675)(8.8,0.8208977)
\psline[linecolor=colour0, linewidth=0.04](10.4,0.8208977)(10.4,0.020897675)(10.8,0.8208977)
\psline[linecolor=colour0, linewidth=0.04](10.4,0.020897675)(11.2,0.8208977)
\psdots[linecolor=black, dotsize=0.16](9.2,0.020897675)
\psdots[linecolor=black, dotsize=0.16](10.4,0.020897675)
\rput[bl](0.0,-0.7791023){$a$}
\rput[bl](3.34,-0.7791023){$b$}
\rput[bl](8.0,-0.7791023){$a$}
\rput[bl](11.36,-0.7791023){$b$}
\end{pspicture}
}
  \caption{An NNI move on the trail marked in bold. One of the incidences of the edges~$a$ and $b$ were interchanged.}
  \label{fig:nni}
\end{figure}

% So, we think of an NNI move as a function $\phi_G$ that associates a trail~$W$ of 
% length three to a graph $G$ on the same edge set of $G$, in symbols $\phi_G(W) = G'$. 
% Moreover, $W$ is also a trail in~$G'$ and $\phi_{G'}(W) = G$.
We think of an NNI move as a function $\gamma_W$ that associates 
the graph~$G$ to the graph~$G'$.  In symbols, $\gamma_W(G) = G'$.  
Observe that $W$ is also a trail in~$G'$ and $\gamma_{W}(G') = G$. 
%, if we apply the NNI to~$W$ in $G'$, we go back to $G$. 

The well-known rotation, used in data structures to balance binary trees,
is a particular NNI move, performed on a $\{1,3\}$-tree (Figure~\ref{fig:rotation}).

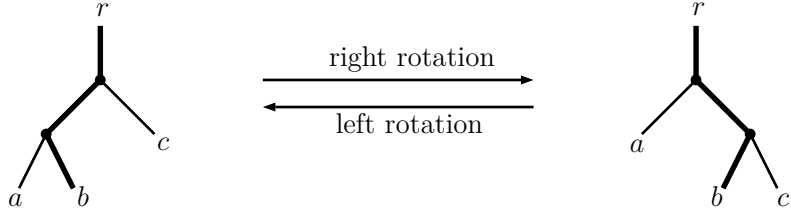
\begin{figure}[h]
  \centering
  \scalebox{.9}{% \usepackage[usenames,dvipsnames]{pstricks}
% \usepackage{epsfig}
% \usepackage{pst-grad} % For gradients
% \usepackage{pst-plot} % For axes
% \usepackage[space]{grffile} % For spaces in paths
% \usepackage{etoolbox} % For spaces in paths
% \makeatletter % For spaces in paths
% \patchcmd\Gread@eps{\@inputcheck#1 }{\@inputcheck"#1"\relax}{}{}
% \makeatother
% \psscalebox{1.0 1.0} % Change this value to rescale the drawing.
{
\begin{pspicture}(0,-1.5617871)(11.98,1.5617871)
\psline[linecolor=black, linewidth=0.04](9.36,-0.42584962)(10.16,0.3741504)(10.96,-0.42584962)(10.56,-1.2258496)
\psline[linecolor=black, linewidth=0.04, arrowsize=0.05291667cm 2.0,arrowlength=1.4,arrowinset=0.0]{->}(3.76,0.3741504)(7.76,0.3741504)
\psline[linecolor=black, linewidth=0.04, arrowsize=0.05291667cm 2.0,arrowlength=1.4,arrowinset=0.0]{->}(7.76,-0.025849609)(3.76,-0.025849609)
\rput[bl](4.84,-0.4058496){left rotation}
\rput[bl](4.74,0.4941504){right rotation}
\rput[bl](1.3,1.3141503){$r$}
\rput[bl](2.2,-0.6658496){$c$}
\rput[bl](1.02,-1.4858496){$b$}
\rput[bl](0.0,-1.4858496){$a$}
\psline[linecolor=black, linewidth=0.04](0.16,-1.2258496)(0.56,-0.42584962)(0.96,-1.2258496)
\psline[linecolor=black, linewidth=0.04](1.36,0.3741504)(2.16,-0.42584962)
\psline[linecolor=black, linewidth=0.08](1.36,1.1741503)(1.36,0.3741504)(0.56,-0.42584962)
\rput[bl](10.1,1.3341504){$r$}
\rput[bl](11.36,-1.4858496){$c$}
\rput[bl](10.38,-1.4858496){$b$}
\rput[bl](9.18,-0.6858496){$a$}
\psline[linecolor=black, linewidth=0.08](10.16,0.3741504)(10.16,1.1741503)
\psline[linecolor=black, linewidth=0.04](10.96,-0.42584962)(11.36,-1.2258496)
\psdots[linecolor=black, dotsize=0.16](0.56,-0.42584962)
\psdots[linecolor=black, dotsize=0.16](1.36,0.3741504)
\psdots[linecolor=black, dotsize=0.16](10.16,0.3741504)
\psdots[linecolor=black, dotsize=0.16](10.96,-0.42584962)
\psline[linecolor=black, linewidth=0.08](0.56,-0.42584962)(0.96,-1.2258496)
\psline[linecolor=black, linewidth=0.08](10.16,0.3741504)(10.96,-0.42584962)
\psline[linecolor=black, linewidth=0.08](10.96,-0.42584962)(10.56,-1.2258496)
\end{pspicture}
}}
  \caption{Right and left rotations applied to a binary tree.}
  \label{fig:rotation}
\end{figure}

An NNI move does not affect the incidence to leaves, thus it preserves
the partition of the edge set into internal and external edges.

Culik and Wood~\cite[Thm.~2.4]{CulikW1982} proved that any two 
$\{1,3\}$-trees with $\ell$ (labelled) leaves can be transformed 
into one another through a finite series of NNI moves.  
They additionally gave an upper bound of~$4\ell-12+4\ell \log_2 (\ell/3)$ 
on the number of NNI moves needed for this transformation.
%%%%%%%%%%%%%%%%%%%%%%%%%%%%%%%%%%%%%%%%%%%%%%%%%%%%%%%%%%%%%%%%%%%%%
In this section, first we extend Culik and Wood's theorem to trees 
with the same degree sequence (Lemma~\ref{lem:treesdegreeseq}), 
which we then use to extend their result further to connected graphs 
with the same degree sequence (Theorem~\ref{thm:connected}). 

A \emph{caterpillar} is a tree for which the removal of all leaves results in a path,
called its \emph{central path}, or in the empty graph.  For the later, we define that the central 
path is empty.  

\begin{lem}\label{lem:caterpillar}
  Any tree can be transformed into a caterpillar 
  with the same degree sequence through a series of NNI moves. 
\end{lem}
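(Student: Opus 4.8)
The plan is to reduce the statement to a structural fact about the tree that remains after deleting all leaves. Write $T^\circ$ for the \emph{internal tree} of $T$, the subtree induced by the non-leaf vertices, and observe that $T$ is a caterpillar exactly when $T^\circ$ is a path (allowing the empty path or a single vertex). Since an NNI move fixes every leaf and preserves every degree, the set of internal vertices and the internal/external partition stay fixed throughout; so the whole task is to \emph{linearize} $T^\circ$ by NNI moves. I would measure progress by $\mathrm{br}(T)$, the number of \emph{branch vertices}, i.e. internal vertices with at least three internal neighbours, and argue by induction on $\mathrm{br}(T)$. When $\mathrm{br}(T)=0$ the internal tree has maximum degree at most two, hence is a path, so $T$ is a caterpillar and we are done.

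For the inductive step I would fix an \emph{outermost} branch vertex $b$, one all of whose internal subtrees except possibly one (the direction toward the remaining branch vertices) are \emph{legs} — pendant paths of $T^\circ$ with no further branching; such a $b$ exists by walking outward from any branch vertex until the branching stops. The goal is to lower the internal degree of $b$ to two using only moves that touch $b$ and its legs, so that $b$ stops being a branch vertex while no new branch vertex appears. Two local moves suffice. An \emph{absorbing} move handles a leg that consists of a single internal vertex $q$: being an internal leaf of $T^\circ$, $q$ carries a genuine leaf $r$ of $T$, and the NNI on the trail $p\,b\,q\,r$ — with $p$ the base of another internal direction at $b$ — reattaches that direction beyond $q$ and hangs $r$ at $b$; the length-one leg is thereby merged onto a neighbouring leg and the internal degree of $b$ drops by one. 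A \emph{rebalancing} move handles the case of no short leg: for two legs $\beta=(\beta_1,\beta_2,\dots)$ and $\gamma=(\gamma_1,\dots)$ at $b$, the NNI on the trail $\gamma_1\,b\,\beta_1\,\beta_2$ transfers one unit of length from $\beta$ to $\gamma$ while keeping both as legs and keeping the number of legs fixed. Using the lexicographic monovariant (number of legs at $b$, then length of a shortest leg at $b$), rebalancing a shortest leg strictly decreases the minimum leg length until it reaches one, at which point an absorbing move decreases the number of legs; after finitely many steps $b$ retains a single leg, is no longer a branch vertex, and $\mathrm{br}(T)$ has dropped.

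The main obstacle is precisely the ``thin leg'' phenomenon that forces this two-move scheme: a single NNI always trades one pendant object for another, so it can never by itself remove a leg at a branch vertex whose legs all have length at least two — any naive absorbing attempt merely splits a leg and ejects a new stub, conserving the number of legs. This is why the argument must first manufacture a length-one leg by rebalancing and only then absorb it, and why no one-step potential works; the correct bookkeeping is the lexicographic pair above. The remaining care, which I expect to be routine but must be verified, is that each move is a legal length-three trail for arbitrary degree sequences (legs may carry extra leaves and their vertices may have degree larger than two), that absorbing and rebalancing keep every leg a non-branching path, and that neither creates a branch vertex elsewhere — so that $\mathrm{br}(T)$ genuinely decreases by one at each outermost vertex and the induction closes.
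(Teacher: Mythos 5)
Your proof is correct, and it takes a genuinely different route from the paper's. The paper's argument is a single-monovariant induction on the length of a longest path: if $T$ is not a caterpillar, pick a longest path $P$ and an internal edge $uv\notin P$ with $u\in P$; maximality of $P$ forces $u$ to have two neighbours $w,z$ on $P$, and the one NNI on the trail with edges $wu$, $uv$, $vv'$ splices $v$ into $P$, strictly increasing the longest path length, so finitely many moves of this single type terminate in a caterpillar. In effect the paper grows the central path directly, one vertex per move, and never needs to track branch vertices, legs, or leaf edges. You instead shrink the branching of the internal tree $T^\circ$: induction on the number of branch vertices, selection of an outermost one, and the two-move scheme (rebalance a shortest leg until a length-one leg appears, then absorb it using the genuine leaf $r$ that must hang at its tip) governed by the lexicographic pair (number of legs at $b$, shortest leg length). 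Your verifications are the right ones and they go through: the absorbing trail $p\,b\,q\,r$ is legal precisely because $q$ is a leaf of $T^\circ$ but internal in $T$, hence carries a pendant leaf; both moves keep every leg a non-branching pendant path and raise no internal degree elsewhere, so no new branch vertex appears; and $b$ being outermost guarantees at least two legs, so a rebalancing partner $\gamma$ always exists. Your side remark is also accurate and explains the design: a single NNI only trades one pendant object for another, so it cannot lower the internal degree of $b$ unless a leaf edge sits at distance two from $b$ along the trail, which is exactly what rebalancing manufactures. What the paper's approach buys is brevity --- one move type, a scalar monovariant, no case analysis; what yours buys is finer local control (every pivot is an edge incident to the chosen branch vertex, and the moves are confined to $b$ and its legs), a quantitative handle on the number of moves per branch vertex, and a structural picture of how NNIs dismantle branching, at the cost of the thin-leg case analysis that the longest-path argument sidesteps entirely.
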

\begin{proof}
  Let $T$ be a tree.  
  If $T$ is a caterpillar, there is nothing to prove. 
  So, we may assume~$T$ is not a caterpillar. 
  %%%%%%%%%%%%%%%%%%%%%%%%%%%%%%%%%%%%%%%%%%%%%%%%%%%%%%%%%%%%%%%%%%%
  % Let $P$ be a longest path in $T$.
  % There is an internal vertex of $T$ that is not in~$P$.
  % Choose one such vertex, say $v$, that is adjacent to a vertex $u$ in~$P$.
  %%%%%%%%%%%%%%%%%%%%%%%%%%%%%%%%%%%%%%%%%%%%%%%%%%%%%%%%%%%%%%%%%%%%%%%%%%%%%%%
  Let $P$ be a longest path in $T$ and $uv$ an internal edge of $T$ not in~$P$ such that $u$ is a vertex in $P$.
  The vertex $u$ has two neighbors in~$P$, otherwise~$P$ would not be a longest path, as $v$ is not in~$P$.
  Let $w$ and $z$ be the two neighbors of $u$ in~$P$. 
  Let $W$ be a trail with edges $wu$, $uv$, and $vv'$, where $v'$ is a neighbor of $v$ other than $u$.
  Perform an NNI on the trail $W$ as in Figure~\ref{fig:NNI4trees}, to insert $v$ in $P$, 
  obtaining another tree~$T'$ with the same degree sequence and a path longer than $P$. 
  By repeating this process, we obtain a desired caterpillar after a finite number of NNI moves. 
\end{proof}

\begin{figure}[ht]
  \centering
  \scalebox{0.9}{% \usepackage[usenames,dvipsnames]{pstricks}
% \usepackage{epsfig}
% \usepackage{pst-grad} % For gradients
% \usepackage{pst-plot} % For axes
% \usepackage[space]{grffile} % For spaces in paths
% \usepackage{etoolbox} % For spaces in paths
% \makeatletter % For spaces in paths
% \patchcmd\Gread@eps{\@inputcheck#1 }{\@inputcheck"#1"\relax}{}{}
% \makeatother
% \psscalebox{1.0 1.0} % Change this value to rescale the drawing.
{
\begin{pspicture}(0,-1.4532415)(14.681667,1.4532415)
\definecolor{colour1}{rgb}{1.0,0.0,0.2}
\psline[linecolor=red, linewidth=0.08](0.74083346,1.0256047)(1.9408334,1.0256047)(1.9408334,-0.1743953)(1.5408335,-1.3743953)
\psline[linecolor=black, linewidth=0.04](1.9408334,-0.1743953)(1.9408334,-1.3743953)
\psline[linecolor=black, linewidth=0.04](1.9408334,-0.1743953)(2.3408334,-1.3743953)
\psline[linecolor=black, linewidth=0.04](1.9408334,1.0256047)(3.1408334,1.0256047)
\psline[linecolor=black, linewidth=0.04, linestyle=dashed, dash=0.17638889cm 0.10583334cm](3.1408334,1.0256047)(3.9408333,1.0256047)
\psline[linecolor=black, linewidth=0.04, linestyle=dashed, dash=0.17638889cm 0.10583334cm](0.74083346,1.0256047)(-0.059166566,1.0256047)
\rput[bl](2.2808335,-0.37439528){$v$}
\rput[bl](1.7608334,1.2056047){$u$}
\rput[bl](0.44083345,1.2056047){$w$}
\rput[bl](3.0208335,1.2256047){$z$}
\rput[bl](5.2408333,1.2056047){$w$}
\rput[bl](7.820833,1.2256047){$z$}
\psline[linecolor=black, linewidth=0.04](6.7408333,-0.1743953)(6.7408333,-1.3743953)
\psline[linecolor=black, linewidth=0.04](6.7408333,-0.1743953)(7.1408334,-1.3743953)
\psline[linecolor=black, linewidth=0.04](6.7408333,1.0256047)(7.9408336,1.0256047)
\psline[linecolor=black, linewidth=0.04, linestyle=dashed, dash=0.17638889cm 0.10583334cm](7.9408336,1.0256047)(8.740833,1.0256047)
\psline[linecolor=black, linewidth=0.04, linestyle=dashed, dash=0.17638889cm 0.10583334cm](5.5408335,1.0256047)(4.7408333,1.0256047)
\rput[bl](7.0808334,-0.37439528){$v$}
\rput[bl](6.5608335,1.2056047){$u$}
\psline[linecolor=red, linewidth=0.08](6.7408333,1.0256047)(6.7408333,1.0256047)(6.7408333,-0.1743953)
\psline[linecolor=red, linewidth=0.08](5.5408335,1.0256047)(6.7408333,-0.1743953)
\psbezier[linecolor=red, linewidth=0.08](6.3408337,-1.3743953)(5.9408336,-1.3743953)(5.9408336,0.6256047)(6.7408333,1.0256047058105469)
\psline[linecolor=red, linewidth=0.08](12.740833,1.0256047)(12.740833,-0.1743953)
\psline[linecolor=colour1, linewidth=0.08](10.340834,1.0256047)(12.740833,1.0256047)
\psline[linecolor=black, linewidth=0.04](11.540833,1.0256047)(11.940833,-0.1743953)
\psline[linecolor=black, linewidth=0.04](12.740833,1.0256047)(13.940833,1.0256047)
\psline[linecolor=black, linewidth=0.04](11.540833,1.0256047)(11.540833,-0.1743953)
\psline[linecolor=black, linewidth=0.04, linestyle=dashed, dash=0.17638889cm 0.10583334cm](13.940833,1.0256047)(14.740833,1.0256047)
\psline[linecolor=black, linewidth=0.04, linestyle=dashed, dash=0.17638889cm 0.10583334cm](10.340834,1.0256047)(9.540833,1.0256047)
\rput[bl](13.820833,1.2256047){$z$}
\rput[bl](12.680834,1.2056047){$u$}
\rput[bl](11.360833,1.2256047){$v$}
\rput[bl](10.060833,1.2256047){$w$}
\psdots[linecolor=black, dotsize=0.16](0.74083346,1.0256047)
\psdots[linecolor=black, dotsize=0.16](1.9408334,1.0256047)
\psdots[linecolor=black, dotsize=0.16](1.9408334,-0.1743953)
\psdots[linecolor=black, dotsize=0.16](1.5408335,-1.3743953)
\psdots[linecolor=black, dotsize=0.16](3.1408334,1.0256047)
\psdots[linecolor=black, dotsize=0.16](5.5408335,1.0256047)
\psdots[linecolor=black, dotsize=0.16](6.7408333,1.0256047)
\psdots[linecolor=black, dotsize=0.16](7.9408336,1.0256047)
\psdots[linecolor=black, dotsize=0.16](6.7408333,-0.1743953)
\psdots[linecolor=black, dotsize=0.16](6.3408337,-1.3743953)
\psdots[linecolor=black, dotsize=0.16](10.340834,1.0256047)
\psdots[linecolor=black, dotsize=0.16](11.540833,1.0256047)
\psdots[linecolor=black, dotsize=0.16](12.740833,1.0256047)
\psdots[linecolor=black, dotsize=0.16](13.940833,1.0256047)
\psdots[linecolor=black, dotsize=0.16](12.740833,-0.1743953)
\end{pspicture}
}}
  \caption{An NNI to insert $v$ in the central path.}
  \label{fig:NNI4trees}
\end{figure}
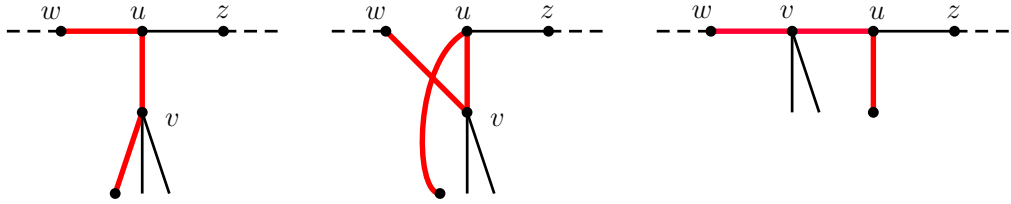

The \emph{spine} of a caterpillar is the sequence of the degrees of
the vertices in the central path.  We say the caterpillar is
\emph{ordered} if its spine is a monotonic nonincreasing sequence
(Figure~\ref{fig:caterpillar}).

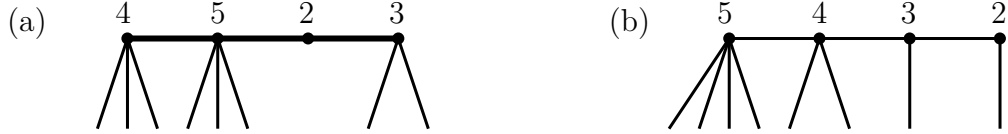
\begin{figure}[ht]
  \centering
  % \usepackage[usenames,dvipsnames]{pstricks}
% \usepackage{epsfig}
% \usepackage{pst-grad} % For gradients
% \usepackage{pst-plot} % For axes
% \usepackage[space]{grffile} % For spaces in paths
% \usepackage{etoolbox} % For spaces in paths
% \makeatletter % For spaces in paths
% \patchcmd\Gread@eps{\@inputcheck#1 }{\@inputcheck"#1"\relax}{}{}
% \makeatother
% \psscalebox{1.0 1.0} % Change this value to rescale the drawing.
{
\begin{pspicture}(0,-0.8041562)(13.3,0.8041562)
\rput[bl](0.0,0.4069296){(a)}
\psdots[linecolor=black, dotsize=0.16](5.2,0.4069296)
\psdots[linecolor=black, dotsize=0.16](4.0,0.4069296)
\psdots[linecolor=black, dotsize=0.16](2.8,0.4069296)
\psdots[linecolor=black, dotsize=0.16](1.6,0.4069296)
\rput[bl](5.08,0.6069296){3}
\rput[bl](3.88,0.6069296){2}
\rput[bl](2.7,0.6069296){5}
\rput[bl](1.44,0.6069296){4}
\psline[linecolor=black, linewidth=0.04](5.2,0.4069296)(5.6,-0.7930704)
\psline[linecolor=black, linewidth=0.04](5.2,0.4069296)(4.8,-0.7930704)
\psline[linecolor=black, linewidth=0.04](2.8,0.4069296)(3.2,-0.7930704)
\psline[linecolor=black, linewidth=0.04](2.8,0.4069296)(2.4,-0.7930704)
\psline[linecolor=black, linewidth=0.04](2.8,0.4069296)(2.8,-0.7930704)
\psline[linecolor=black, linewidth=0.04](1.6,0.4069296)(2.0,-0.7930704)
\psline[linecolor=black, linewidth=0.04](1.6,0.4069296)(1.2,-0.7930704)
\psline[linecolor=black, linewidth=0.04](1.6,0.4069296)(1.6,-0.7930704)
\psline[linecolor=black, linewidth=0.08](1.6,0.4069296)(5.2,0.4069296)
\psline[linecolor=black, linewidth=0.04](9.6,0.4069296)(13.2,0.4069296)
\psline[linecolor=black, linewidth=0.04](9.6,0.4069296)(9.6,-0.7930704)
\psline[linecolor=black, linewidth=0.04](9.6,0.4069296)(9.2,-0.7930704)
\psline[linecolor=black, linewidth=0.04](9.6,0.4069296)(10.0,-0.7930704)
\psline[linecolor=black, linewidth=0.04](10.8,0.4069296)(10.4,-0.7930704)
\psline[linecolor=black, linewidth=0.04](10.8,0.4069296)(11.2,-0.7930704)
\psline[linecolor=black, linewidth=0.04](13.2,0.4069296)(13.2,-0.7930704)
\rput[bl](9.44,0.6069296){5}
\rput[bl](10.7,0.6069296){4}
\rput[bl](11.88,0.6069296){3}
\rput[bl](13.08,0.6069296){2}
\psdots[linecolor=black, dotsize=0.16](9.6,0.4069296)
\psdots[linecolor=black, dotsize=0.16](10.8,0.4069296)
\psdots[linecolor=black, dotsize=0.16](12.0,0.4069296)
\psdots[linecolor=black, dotsize=0.16](13.2,0.4069296)
\rput[bl](8.0,0.4069296){(b)}
\psline[linecolor=black, linewidth=0.04](9.6,0.4069296)(8.8,-0.7930704)
\psline[linecolor=black, linewidth=0.04](12.0,0.4069296)(12.0,-0.7930704)
\end{pspicture}
}
  \caption{(a) A caterpillar with spine $(4,5,2,3)$ and central path highlighted in bold.
           (b)~An ordered caterpillar.}
  \label{fig:caterpillar}
\end{figure}

\begin{lem}\label{lem:orderedcaterpillar}
  Any caterpillar can be transformed into an ordered caterpillar 
  with the same degree sequence through a series of NNI moves. 
\end{lem}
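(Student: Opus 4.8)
The plan is to reduce the statement to sorting the spine by adjacent transpositions. Write the central path of the given caterpillar as $u_1 u_2 \cdots u_n$ and set $d_i := \deg u_i$, so that the spine is $(d_1,\dots,d_n)$. Since every NNI move preserves the degree sequence, the number of edges, and the number of connected components, a tree is carried to a tree; hence it suffices to exhibit, for each admissible index, an NNI move that transposes two consecutive spine entries $d_i,d_{i+1}$ while keeping the graph a caterpillar. Adjacent transpositions generate the full symmetric group, so iterating such moves (for instance as in bubble sort) realizes the monotone nonincreasing arrangement of the multiset $\{d_1,\dots,d_n\}$ along the central path, which is exactly an ordered caterpillar.

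For an interior pair, $2 \le i \le n-2$, I would apply the NNI to the trail $u_{i-1}\,u_i\,u_{i+1}\,u_{i+2}$ with pivot $u_i u_{i+1}$, built from the two path edges flanking the pivot. Tracking the incidence function, this move replaces the edges $u_{i-1}u_i$ and $u_{i+1}u_{i+2}$ by $u_{i-1}u_{i+1}$ and $u_i u_{i+2}$, so that the central path becomes $\cdots u_{i-1}\,u_{i+1}\,u_i\,u_{i+2}\cdots$, each vertex carrying along its own leaves. The outcome is again a caterpillar of the same degree sequence whose spine is $(d_1,\dots,d_{i-1},d_{i+1},d_i,d_{i+2},\dots,d_n)$, i.e.\ the transposition of $d_i$ and $d_{i+1}$.

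The main obstacle is the two boundary pairs $(1,2)$ and $(n{-}1,n)$, where the flanking path edge on one side is missing and the interior move is unavailable. Here I would use that an endpoint of the central path carries at least one leaf: for the pair $(1,2)$ (assuming $n\ge 3$) pick a leaf $p$ of $u_1$ and apply the NNI to the trail $p\,u_1\,u_2\,u_3$ with pivot $u_1u_2$. This replaces $pu_1$ and $u_2u_3$ by $pu_2$ and $u_1u_3$; afterwards $u_2$ retains only $u_1$ as a non-leaf neighbor while $u_1$ acquires the two non-leaf neighbors $u_2$ and $u_3$, so the new central path is $u_2\,u_1\,u_3\,u_4\cdots u_n$ and the spine becomes $(d_2,d_1,d_3,\dots,d_n)$. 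The pair $(n{-}1,n)$ is handled symmetrically using a leaf of $u_n$, and the tiny cases $n\le 2$ need no work, since a one- or two-vertex spine is already nonincreasing when read in the appropriate direction.

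Putting these together, every adjacent transposition of the spine is realizable by an NNI move preserving the caterpillar structure and the degree sequence, and finitely many of them sort the spine into nonincreasing order. The two points demanding genuine care are verifying that each prescribed move keeps the graph a connected caterpillar — which I would check by explicitly exhibiting the new central path and confirming that the relocated edges neither create a cycle nor detach a leaf — and the boundary case above, which forces the use of a leaf edge rather than a second path edge as the extreme edge of the trail.
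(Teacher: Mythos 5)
Your proof is correct and follows essentially the same route as the paper: realize each adjacent transposition of spine entries by a single NNI move with pivot on the central path, then sort the spine (the paper decreases inversions one by one, citing its Figure~4(a); your bubble sort is the same idea). Your explicit treatment of the boundary pairs via a leaf edge is a detail the paper leaves implicit -- its swap move works verbatim when one extreme vertex of the trail is taken to be a leaf, which is exactly your boundary move -- so this is added care rather than a different argument.
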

\begin{proof}
  An NNI can be used to swap any two adjacent vertices in the central path of a caterpillar, 
  as in Figure~\ref{fig:cat-NNI-t1}(a). 
  So we use an NNI to decrease, one by one, the number of inversions 
  in the spine of a caterpillar until we obtain an ordered caterpillar.
\end{proof}

\begin{figure}[ht]
  \centering
  % \usepackage[usenames,dvipsnames]{pstricks}
% \usepackage{epsfig}
% \usepackage{pst-grad} % For gradients
% \usepackage{pst-plot} % For axes
% \usepackage[space]{grffile} % For spaces in paths
% \usepackage{etoolbox} % For spaces in paths
% \makeatletter % For spaces in paths
% \patchcmd\Gread@eps{\@inputcheck#1 }{\@inputcheck"#1"\relax}{}{}
% \makeatother
% \psscalebox{1.0 1.0} % Change this value to rescale the drawing.
{
\begin{pspicture}(0,-3.441787)(13.940834,3.441787)
\definecolor{colour1}{rgb}{1.0,0.0,0.2}
\psline[linecolor=colour1, linewidth=0.08](1.6,3.0141504)(5.2,3.0141504)
\psline[linecolor=black, linewidth=0.04](4.0,3.0141504)(4.4,1.8141503)
\psline[linecolor=black, linewidth=0.04](4.0,3.0141504)(3.6,1.8141503)
\psline[linecolor=black, linewidth=0.04](2.8,3.0141504)(3.2,1.8141503)
\psline[linecolor=black, linewidth=0.04](4.0,3.0141504)(4.0,1.8141503)
\psline[linecolor=black, linewidth=0.04](2.8,3.0141504)(2.4,1.8141503)
\psline[linecolor=black, linewidth=0.04, linestyle=dashed, dash=0.17638889cm 0.10583334cm](5.2,3.0141504)(6.0,3.0141504)
\psline[linecolor=black, linewidth=0.04, linestyle=dashed, dash=0.17638889cm 0.10583334cm](1.6,3.0141504)(0.8,3.0141504)
\rput[bl](5.08,3.2141504){$z$}
\rput[bl](3.94,3.2141504){$v$}
\rput[bl](2.62,3.1941504){$u$}
\rput[bl](1.28,3.2141504){$w$}
\psline[linecolor=black, linewidth=0.04](4.0,0.6141504)(4.4,-0.5858496)
\psline[linecolor=black, linewidth=0.04](4.0,0.6141504)(3.6,-0.5858496)
\psline[linecolor=black, linewidth=0.04](2.8,0.6141504)(3.2,-0.5858496)
\psline[linecolor=black, linewidth=0.04](4.0,0.6141504)(4.0,-0.5858496)
\psline[linecolor=black, linewidth=0.04](2.8,0.6141504)(2.4,-0.5858496)
\psline[linecolor=black, linewidth=0.04, linestyle=dashed, dash=0.17638889cm 0.10583334cm](5.2,0.6141504)(6.0,0.6141504)
\psline[linecolor=black, linewidth=0.04, linestyle=dashed, dash=0.17638889cm 0.10583334cm](1.6,0.6141504)(0.8,0.6141504)
\rput[bl](5.08,0.8141504){$z$}
\rput[bl](3.96,0.8141504){$v$}
\rput[bl](2.62,0.7941504){$u$}
\psline[linecolor=colour1, linewidth=0.08](1.6,-1.7858496)(5.2,-1.7858496)
\rput[bl](1.32,-1.5858496){$w$}
\rput[bl](2.62,-1.5858496){$v$}
\rput[bl](3.94,-1.6058496){$u$}
\rput[bl](5.08,-1.5858496){$z$}
\psline[linecolor=black, linewidth=0.04, linestyle=dashed, dash=0.17638889cm 0.10583334cm](1.6,-1.7858496)(0.8,-1.7858496)
\psline[linecolor=black, linewidth=0.04, linestyle=dashed, dash=0.17638889cm 0.10583334cm](5.2,-1.7858496)(6.0,-1.7858496)
\psline[linecolor=black, linewidth=0.04](2.8,-1.7858496)(2.4,-2.9858496)
\psline[linecolor=black, linewidth=0.04](2.8,-1.7858496)(2.8,-2.9858496)
\psline[linecolor=black, linewidth=0.04](2.8,-1.7858496)(3.2,-2.9858496)
\psline[linecolor=black, linewidth=0.04](4.0,-1.7858496)(3.6,-2.9858496)
\psline[linecolor=black, linewidth=0.04](4.0,-1.7858496)(4.4,-2.9858496)
\psbezier[linecolor=red, linewidth=0.08](1.6,0.6141504)(1.6,1.4141504)(4.0,1.4141504)(4.0,0.614150390625)
\psline[linecolor=colour1, linewidth=0.08](2.8,0.6141504)(4.0,0.6141504)
\psbezier[linecolor=red, linewidth=0.08](2.8,0.6141504)(2.8,-0.1858496)(5.2,-0.1858496)(5.2,0.614150390625)
\rput[bl](1.3,0.7941504){$w$}
\rput[bl](0.0,3.0141504){(a)}
\rput[bl](8.0,3.0141504){(b)}
\rput[bl](11.64,-0.96584964){$d$}
\rput[bl](11.12,-0.96584964){$e$}
\rput[bl](10.64,-0.96584964){$a$}
\rput[bl](10.24,-0.96584964){$c$}
\rput[bl](12.12,-0.96584964){$b$}
\psline[linecolor=black, linewidth=0.04](12.0,0.6141504)(12.24,-0.5858496)
\psline[linecolor=black, linewidth=0.04](12.0,0.6141504)(11.76,-0.5858496)
\psline[linecolor=black, linewidth=0.04](10.8,0.6141504)(11.2,-0.5858496)
\psline[linecolor=black, linewidth=0.04](10.8,0.6141504)(10.8,-0.5858496)
\psline[linecolor=black, linewidth=0.04](10.8,0.6141504)(10.4,-0.5858496)
\psline[linecolor=black, linewidth=0.04, linestyle=dashed, dash=0.17638889cm 0.10583334cm](13.2,0.6141504)(14.0,0.6141504)
\psline[linecolor=black, linewidth=0.04, linestyle=dashed, dash=0.17638889cm 0.10583334cm](9.6,0.6141504)(8.8,0.6141504)
\rput[bl](13.08,0.8141504){$z$}
\rput[bl](11.94,0.8141504){$v$}
\rput[bl](10.62,0.7941504){$u$}
\rput[bl](9.28,0.8141504){$w$}
\psline[linecolor=black, linewidth=0.04](9.6,0.6141504)(13.2,0.6141504)
\rput[bl](11.64,-3.3658495){$d$}
\rput[bl](11.12,-3.3658495){$c$}
\rput[bl](10.7,-3.3658495){$b$}
\rput[bl](10.24,-3.3658495){$a$}
\rput[bl](12.12,-3.3658495){$e$}
\psline[linecolor=black, linewidth=0.04](12.0,-1.7858496)(12.24,-2.9858496)
\psline[linecolor=black, linewidth=0.04](12.0,-1.7858496)(11.76,-2.9858496)
\psline[linecolor=black, linewidth=0.04](10.8,-1.7858496)(11.2,-2.9858496)
\psline[linecolor=black, linewidth=0.04](10.8,-1.7858496)(10.8,-2.9858496)
\psline[linecolor=black, linewidth=0.04](10.8,-1.7858496)(10.4,-2.9858496)
\psline[linecolor=black, linewidth=0.04, linestyle=dashed, dash=0.17638889cm 0.10583334cm](13.2,-1.7858496)(14.0,-1.7858496)
\psline[linecolor=black, linewidth=0.04, linestyle=dashed, dash=0.17638889cm 0.10583334cm](9.6,-1.7858496)(8.8,-1.7858496)
\rput[bl](13.08,-1.5858496){$z$}
\rput[bl](11.94,-1.5858496){$v$}
\rput[bl](10.62,-1.6058496){$u$}
\rput[bl](9.28,-1.5858496){$w$}
\psline[linecolor=black, linewidth=0.04](9.6,-1.7858496)(13.2,-1.7858496)
\psline[linecolor=red, linewidth=0.08](12.22,-0.5858496)(12.0,0.6141504)(10.8,0.6141504)(11.2,-0.5858496)(11.2,-0.5858496)(11.2,-0.5858496)(11.2,-0.5858496)
\rput[bl](11.64,1.4341503){$a$}
\rput[bl](11.12,1.4341503){$e$}
\rput[bl](10.64,1.4341503){$d$}
\rput[bl](10.24,1.4341503){$c$}
\rput[bl](12.24,1.4141504){$b$}
\psline[linecolor=black, linewidth=0.04](12.0,3.0141504)(12.24,1.8141503)
\psline[linecolor=black, linewidth=0.04](12.0,3.0141504)(11.76,1.8141503)
\psline[linecolor=black, linewidth=0.04](10.8,3.0141504)(11.2,1.8141503)
\psline[linecolor=black, linewidth=0.04](10.8,3.0141504)(10.8,1.8141503)
\psline[linecolor=black, linewidth=0.04](10.8,3.0141504)(10.4,1.8141503)
\psline[linecolor=black, linewidth=0.04, linestyle=dashed, dash=0.17638889cm 0.10583334cm](13.2,3.0141504)(14.0,3.0141504)
\psline[linecolor=black, linewidth=0.04, linestyle=dashed, dash=0.17638889cm 0.10583334cm](9.6,3.0141504)(8.8,3.0141504)
\rput[bl](13.08,3.2141504){$z$}
\rput[bl](11.94,3.2141504){$v$}
\rput[bl](10.62,3.1941504){$u$}
\rput[bl](9.28,3.2141504){$w$}
\psline[linecolor=black, linewidth=0.04](9.6,3.0141504)(13.2,3.0141504)
\psline[linecolor=red, linewidth=0.08](11.78,1.8141503)(12.0,3.0141504)(10.8,3.0141504)(10.8,1.8141503)(10.8,1.8141503)
\psdots[linecolor=black, dotsize=0.16](1.6,3.0141504)
\psdots[linecolor=black, dotsize=0.16](1.6,0.6141504)
\psdots[linecolor=black, dotsize=0.16](1.6,-1.7858496)
\psdots[linecolor=black, dotsize=0.16](2.8,-1.7858496)
\psdots[linecolor=black, dotsize=0.16](2.8,0.6141504)
\psdots[linecolor=black, dotsize=0.16](2.8,3.0141504)
\psdots[linecolor=black, dotsize=0.16](4.0,3.0141504)
\psdots[linecolor=black, dotsize=0.16](5.2,3.0141504)
\psdots[linecolor=black, dotsize=0.16](5.2,0.6141504)
\psdots[linecolor=black, dotsize=0.16](4.0,0.6141504)
\psdots[linecolor=black, dotsize=0.16](4.0,-1.7858496)
\psdots[linecolor=black, dotsize=0.16](5.2,-1.7858496)
\psdots[linecolor=black, dotsize=0.16](10.8,3.0141504)
\psdots[linecolor=black, dotsize=0.16](9.6,3.0141504)
\psdots[linecolor=black, dotsize=0.16](12.0,3.0141504)
\psdots[linecolor=black, dotsize=0.16](13.2,3.0141504)
\psdots[linecolor=black, dotsize=0.16](13.2,0.6141504)
\psdots[linecolor=black, dotsize=0.16](13.2,-1.7858496)
\psdots[linecolor=black, dotsize=0.16](12.0,-1.7858496)
\psdots[linecolor=black, dotsize=0.16](12.0,0.6141504)
\psdots[linecolor=black, dotsize=0.16](10.8,0.6141504)
\psdots[linecolor=black, dotsize=0.16](10.8,-1.7858496)
\psdots[linecolor=black, dotsize=0.16](9.6,0.6141504)
\psdots[linecolor=black, dotsize=0.16](9.6,-1.7858496)
\end{pspicture}
}
  \caption{(a) An NNI to swap two adjacent vertices in the central path of a caterpillar.
           (b) NNIs to swap labels on leaves hanging from adjacent vertices in the central path.}
  \label{fig:cat-NNI-t1}
\end{figure}
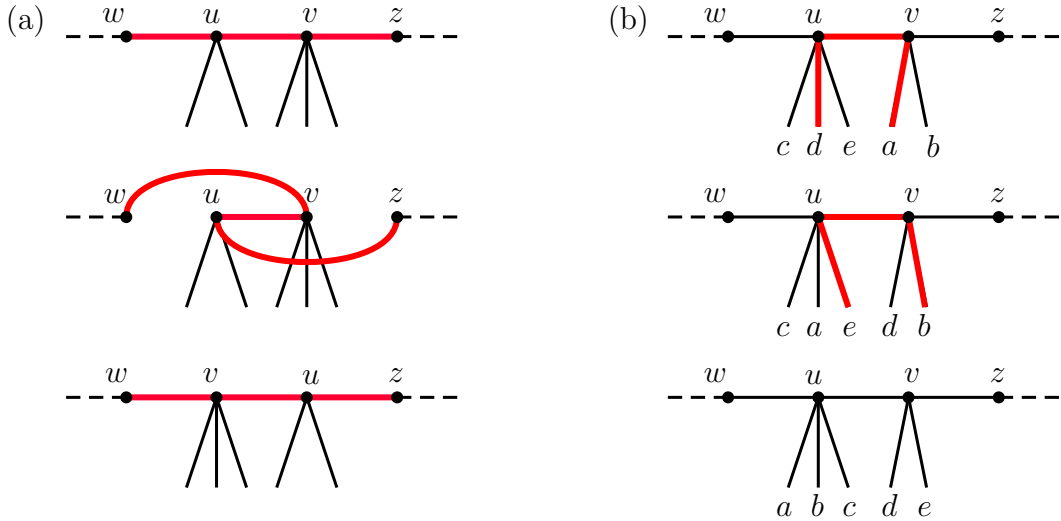

\begin{lem}\label{lem:sortlabels}
  The external edges of a caterpillar can be sorted arbitrarily through a series of NNI moves. 
\end{lem}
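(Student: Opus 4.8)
The plan is to isolate a single elementary move and then show it generates every rearrangement. The key observation is that \emph{one} NNI suffices to interchange two external edges hanging from adjacent vertices of the central path. Concretely, let $u$ and $v$ be consecutive vertices on the central path, let $a$ be an external edge incident to $v$ and $b$ an external edge incident to $u$, and form the trail consisting of $a$, the central-path edge $uv$, and $b$, with $uv$ as pivot. The NNI on this trail moves the end of $a$ from $v$ to $u$ and the end of $b$ from $u$ to $v$; since each external edge retains its leaf end, the only effect is to exchange which of the two spine vertices these two external edges hang from. This is exactly the move in Figure~\ref{fig:cat-NNI-t1}(b), and because an NNI preserves the degree sequence and the partition into internal and external edges, the result is again a caterpillar with the same spine.

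Next I would pass from this single swap to arbitrary sorting by a standard generation argument. First I would linearly order all the leaf slots by traversing the central path from one endpoint to the other and, at each spine vertex, listing its incident external edges. Two slots that are consecutive in this order are either (i) incident to the \emph{same} spine vertex, in which case exchanging the two external edges leaves the caterpillar literally unchanged (they still hang from the same vertex), or (ii) incident to two \emph{adjacent} spine vertices, in which case the exchange is realized by the single NNI of the previous paragraph. Given any target arrangement of the external edges, I would write the corresponding permutation of the slots as a product of adjacent transpositions and perform them in order: the type-(i) transpositions require no move, while the type-(ii) transpositions are NNIs. Since adjacent transpositions generate the full symmetric group, the desired arrangement is reached, so the external edges can indeed be sorted arbitrarily.

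The hard part will be guaranteeing that every type-(ii) step is a legitimate NNI, i.e.\ that the two slots being swapped really do sit on \emph{path-adjacent} spine vertices, each carrying the edge to be moved. For $\{1,3\}$-caterpillars this is immediate, since every internal spine vertex has exactly one leaf and each endpoint has two, so consecutive occupied slots always lie on path-adjacent vertices and a partner edge for each swap is always available. For fully general caterpillars, however, the spine may contain degree-two vertices carrying no leaf, and then consecutive occupied slots can be separated by such a vertex. I would handle these by treating a leaf-free spine vertex as an empty bin and either contracting it for the purpose of the slot order, or verifying directly that an external edge can be shifted past such a vertex by a short sequence of NNIs that restores the spine afterward. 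Checking that this can always be carried out while keeping the object a caterpillar with the same spine is the only delicate point; once that is in place, everything else reduces to the generation of the symmetric group by adjacent transpositions.
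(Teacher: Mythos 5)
Your proposal follows essentially the same route as the paper: the paper's entire proof is the observation, illustrated in Figure~\ref{fig:cat-NNI-t1}(b), that a single NNI with the spine edge $uv$ as pivot swaps an external edge at $u$ with an external edge at $v$, the reduction of an arbitrary rearrangement to such adjacent swaps being left implicit. Where you go beyond the paper is the ``delicate point'' you flag at the end, and you are right to flag it: for a general caterpillar the spine may contain leafless degree-two vertices (e.g.\ an ordered caterpillar with spine $(3,3,2,2)$ has an interior spine vertex carrying no leaf), so consecutive occupied slots need not sit on path-adjacent spine vertices, and the paper's one-line proof silently skips this case as well. Since you leave that step as a sketch, let me record that it does go through, so there is no fatal gap.

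Concretely, suppose the spine contains $\cdots u\,y\,x \cdots$ with $y$ leafless of degree two, $a$ an external edge at $u$, and $b$ an external edge at $x$. Three NNIs swap $a$ and $b$ and restore the caterpillar: first apply the NNI on the trail $(a,\,uy,\,yx)$ with pivot $uy$, which moves $a$ to $y$ and turns $yx$ into $ux$; then the NNI on $(b,\,ux,\,uy)$ with pivot $ux$, which moves $b$ to $u$ and turns $uy$ into $xy$; finally the NNI on $(a,\,xy,\,ux)$ with pivot $xy$, which moves $a$ to $x$ and turns $ux$ back into $uy$. The result is the original caterpillar with $a$ and $b$ interchanged; the intermediate graphs are not caterpillars, but the lemma only constrains the endpoints of the series. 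Inducting on the number of consecutive leafless spine vertices lets an external edge cross any leafless stretch of the spine, after which your adjacent-transposition argument generates all rearrangements exactly as you describe.
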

\begin{proof}
  An NNI can be used to swap any two external edges incident to two adjacent vertices 
  in the central path of a caterpillar, as in Figure~\ref{fig:cat-NNI-t1}(b). 
\end{proof}

%% The next lemma is an extention of Culik and Wood's theorem~\cite[Thm.~2.4]{CulikW1982}
%% that might be of independent interest.

The next lemma is an extension of previous works in the
literature~\cite{CulikW1982, Robinson1971, Tsukui1996,
  Wakabayashi2013} that might be of independent interest.

\begin{lem}\label{lem:treesdegreeseq}
  Any two trees with the same degree sequence and the same set of external edges 
  can be transformed into one another through a series of NNI moves.  
%  Morever, the number of necessary NNI moves is at most $4s-12+4s\log_2(s/3)$, 
%  where $s=m-n/2+2$ is the number of leaves plus twice the dimension of the cycle space.
\end{lem}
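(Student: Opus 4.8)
The plan is to reduce both trees to a common canonical form and then exploit the reversibility of NNI moves. Recall that $\gamma_W(G')=G$ whenever $\gamma_W(G)=G'$, so any series of NNI moves can be run backwards. Consequently, to transform $T$ into $T'$ it suffices to exhibit, for each of $T$ and $T'$ separately, a series of NNI moves carrying it to one and the same tree; I would take that common target to be an ordered caterpillar prescribed by the degree sequence, and then splice the second series in reverse.

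Concretely, I would first apply Lemma~\ref{lem:caterpillar} to turn $T$ into a caterpillar with the same degree sequence, and then Lemma~\ref{lem:orderedcaterpillar} to turn that caterpillar into an ordered one, call it $C$; doing the same to $T'$ produces an ordered caterpillar $C'$. Throughout, the degree sequence is preserved, as required by those lemmas, and, crucially, the partition of the edge set into internal and external edges is preserved, since an NNI never alters incidences to leaves. Hence $C$ and $C'$ have the same degree sequence and the same set of external edges as $T$ and $T'$, respectively, so the same as each other.

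The heart of the argument is to see that $C$ and $C'$ actually coincide. The spine of an ordered caterpillar is the monotone nonincreasing sequence of the degrees of its central-path vertices, and those vertices are precisely the non-leaves; thus the spine is obtained from the degree sequence simply by deleting the entries equal to~$1$. Since $C$ and $C'$ share a degree sequence, they share a spine, hence have identical central paths together with the same number of leaf slots at each spine vertex. What may still differ is which external edge sits in which slot, and this is exactly the freedom governed by Lemma~\ref{lem:sortlabels}: because $C$ and $C'$ carry the same set of external edges over the same slots, I can sort the external edges of $C$ so that each one lands in the position it occupies in $C'$, giving $C=C'$. Concatenating the moves from $T$ to $C$, the identity $C=C'$, and the reverse of the moves from $T'$ to $C'$ then transforms $T$ into $T'$.

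The step needing the most care is the middle one: confirming that equal degree sequences force not merely equal spines but genuinely identical central paths, so that Lemma~\ref{lem:sortlabels} alone completes the matching of external edges. The only bookkeeping subtleties are degenerate spines (repeated degrees, or degree-two central-path vertices that contribute empty leaf slots) and the harmless left--right symmetry of an ordered caterpillar; I expect none of these to obstruct the reduction, but they are what one must check to be sure the two canonical forms are literally the same tree rather than merely isomorphic in an unhelpful way.
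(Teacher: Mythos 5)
Your proposal is correct and follows essentially the same route as the paper's proof: reduce both $T$ and $T'$ to ordered caterpillars via Lemmas~\ref{lem:caterpillar} and~\ref{lem:orderedcaterpillar}, use Lemma~\ref{lem:sortlabels} to match the external edges, and concatenate with the second series of NNI moves reversed. Your only addition is to spell out explicitly---via the observation that the spine is the degree sequence with the $1$-entries deleted---why the two ordered caterpillars coincide, a point the paper leaves implicit.
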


\begin{proof}
  Let $T$ and $T'$ be two trees with the same degree sequence and the same set of external edges. 
  Using Lemmas~\ref{lem:caterpillar} and \ref{lem:orderedcaterpillar}, we obtain  
  a series of NNI moves that transforms~$T$ into an ordered caterpillar with the same degree 
  sequence and the same set of external edges of $T$.
  %%%%%%%%%%%%%%%%%%%%%%%%%%%%%%%%%%%%%%%%%%%%%%%%%%%%%%%%%%%%%%%%%%%%%%%%%%%%%
  Similarly, we obtain another series for $T'$.
  %%%%%%%%%%%%%%%%%%%%%%%%%%%%%%%%%%%%%%%%%%%%%%%%%%%%%%%%%%%%%%%%%%%%%%%%%%%%%
  Using Lemma~\ref{lem:sortlabels}, we extend the series of NNI moves for $T$ to sort the 
  external edges of the caterpillar coming from $T$ into the order they appear in the
  caterpillar coming from $T'$. 
  The composition of these two series of NNI moves, with the series for $T'$ inverted, 
  gives a series of NNI moves that transforms $T$ into $T'$. 
\end{proof}

%\newpage

Let $G$ be a connected graph that is not a tree, and let $e$
be an edge that is in a cycle.  The graph obtained from $G$ by
\emph{cutting} $e$ is the graph $G'$ resulting from the splitting of $e$
into two edges, each connecting one of the ends of $e$ to one
of two new leaves (Figure~\ref{fig:edgesplit}).

\begin{figure}[ht]
  \centering
  \psscalebox{1.0 1.0} % Change this value to rescale the drawing.
{
\begin{pspicture}(0,-1.4394231)(10.557693,1.4394231)
\psline[linecolor=black, linewidth=0.04](1.6788461,1.360577)(0.07884613,-1.0394231)(3.278846,-1.0394231)(1.6788461,1.360577)(1.6788461,-0.23942307)(0.07884613,-1.0394231)
\psline[linecolor=black, linewidth=0.04](1.6788461,-0.23942307)(3.278846,-1.0394231)
\rput[bl](1.5588461,-1.4394231){$e$}
\rput[bl](0.07884613,0.96057695){$G$}
\psdots[linecolor=black, dotsize=0.16](0.07884613,-1.0394231)
\psdots[linecolor=black, dotsize=0.16](3.278846,-1.0394231)
\psdots[linecolor=black, dotsize=0.16](1.6788461,-0.23942307)
\psdots[linecolor=black, dotsize=0.16](1.6788461,1.360577)
\psdots[linecolor=black, dotsize=0.16](8.078846,-1.0394231)
\psdots[linecolor=black, dotsize=0.16](9.678846,-1.0394231)
\psdots[linecolor=black, dotsize=0.16](10.478847,-1.0394231)
\psdots[linecolor=black, dotsize=0.16](7.2788463,-1.0394231)
\psdots[linecolor=black, dotsize=0.16](8.878846,-0.23942307)
\psdots[linecolor=black, dotsize=0.16](8.878846,1.360577)
\rput[bl](7.2788463,0.96057695){$G'$}
\psline[linecolor=black, linewidth=0.04](10.478847,-1.0394231)(9.678846,-1.0394231)
\psline[linecolor=black, linewidth=0.04](7.2788463,-1.0394231)(8.078846,-1.0394231)
\psline[linecolor=black, linewidth=0.04](10.478847,-1.0394231)(8.878846,1.360577)(7.2788463,-1.0394231)(8.878846,-0.23942307)(8.878846,1.360577)
\psline[linecolor=black, linewidth=0.04](8.878846,-0.23942307)(10.478847,-1.0394231)
\end{pspicture}
}
  \caption{Graph $G'$ obtained from $G$ by cutting an edge $e$.}
  \label{fig:edgesplit}
\end{figure}
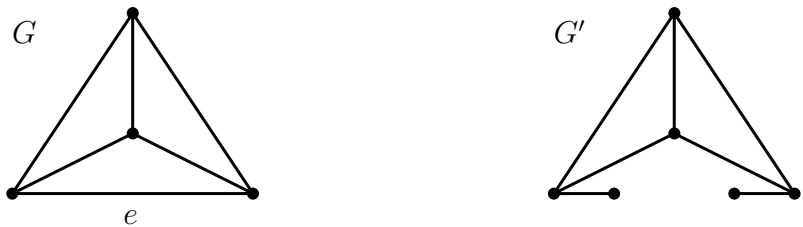

%% \begin{thm}%\label{thm:connected}
%%   Let $G$ and $G'$ be connected graphs with the same degree sequence and the same
%%   set of external edges. Then
%%   \begin{enumerate}[(a)]
%%   \item $G$ can be transformed into $G'$ through a series of NNI moves. \label{thm:connected:a}
%%   \item One can choose a spanning tree in $G$ and a spanning tree in $G'$ 
%%     and require that all the pivots of the NNI moves are internal edges of both
%%     of these spanning
%%     trees.~\label{thm:connected:b}
%%   \end{enumerate}  

%% Any two connected graphs with the same degree sequence and the same
%% set of external edges can be transformed into one another through
%% a series of NNI moves.  
% Morever, the number of necessary NNI moves is at most
% $4s-12+4s\log_2(s/3)$, where $s=m-n/2+2$ is the number of leaves 
% plus twice the dimension of the cycle space.
% \end{thm}

\begin{proof}[Proof of Theorem~\ref{thm:connected}]
  Let $n$ be the number of vertices and $m$ be the number edges 
  in $G$ and $G'$.
  %%%%%%%%%%%%%%%%%%%%%%%%%%%%%%%%%%%%%%%%%%%%%%%%%%%%%%%%%%%%%%%%%%%%%%%%%%%%%
  %%%%%%%%%%%%%%%%%%%%%%%%%%%%%%%%%%%%%%%%%%%%%%%%%%%%%%%%%%%%%%%%%%%%%%%%%%%%%%
  The proof of the theorem is by induction on the dimension $r=m-n+1$ of the cycle space
  of~$G$ and~$G'$.
  If $r = 0$, then~$G$ and~$G'$ are trees and the theorem follows from Lemma~\ref{lem:treesdegreeseq}.
  %%%%%%%%%%%%%%%%%%%%%%%%%%%%%%%%%%%%%%%%%%%%%%%%%%%%%%%%%%%%%%%%%%%%%%%%%%%%5
  % are trees with the same degree sequence and the same
  % set of external edges, thus the result follows from Lemma~\ref{thm:treesdegreeseq}.
  %%%%%%%%%%%%%%%%%%%%%%%%%%%%%%%%%%%%%%%%%%%%%%%%%%%%%%%%%%%%%%%%%%%%%%%%%%%%
  Hence we may assume that~$r > 0$.
  
  %% Let $e$ be an edge of~$G$ that is in a cycle.  If $e$ is a cut-edge in $G'$,  
  %% then exchange the two incidences of $e$ with another edge that is in a cycle in $G_2$
  %% (note that in this case both of these edges are internal).
  % For $i=1,2$, let~$G'_i$ be the graph obtained from~$G_i$ by cutting $e$. 
  % Call $e'$ and $e''$ the two new edges in both $G_1$ and $G_2$.
  Choose an edge in a cycle of $G$ and an edge in a cycle of $G'$.
  Let us denote these two edges by $e$.
  Let $H$ and $H'$ be the graphs obtained from~$G$ and $G'$, respectively,
  by cutting their edge $e$.
  %%%%%%%%%%%%%%%%%%%%%%%%%%%%%%%%%%%%%%%%%%%%%%%%%%%%%%%%%%%%%%%%%%%
  The number of vertices in $H$ and $H'$ is~$n+2$ and the number of edges
  is $m+1$.
  Call $e'$ and $e''$ the two new edges in both $H$ and $H'$.
  %%%%%%%%%%%%%%%%%%%%%%%%%%%%%%%%%%%%%%%%%%%%%%%%%%%%%%%%%%%%%%%%%%
  Since~$e$ is in a cycle in~$G$ and in a cycle in~$G'$, the graphs $H$ and $H'$
  are connected and the dimension of their cycle space is~$r-1$.  
  %%%%%%%%%%%%%%%%%%%%%%%%%%%%%%%%%%%%%%%%%%%%%%%%%%%%%%%%%%%%%%%%%%%
  Also, $H$ and $H'$ have the same degree sequence and the same 
  set of external edges.
  %%%%%%%%%%%%%%%%%%%%%%%%%%%%%%%%%%%%%%%%%%%%%%%%%%%%%%%%%%%%%%%%%%
  By induction, $H$ can be transformed into $H'$ 
  through a series of NNI moves. 
  % at most $4s'-12+4s'\log_2(s'/3)$ NNI moves,
  % where~$s' = m' - n'/2 + 1 = m+1 - (n+2)/2 + 1 = m - n/2 + 1 = s$.
  %%%%%%%%%%%%%%%%%%%%%%%%%%%%%%%%%%%%%%%%%%%%%%%%%%%%%%%%%%%%%%%%%%
  The same series of NNI moves transforms~$G$ into~$G'$.
  %%%%%%%%%%%%%%%%%%%%%%%%%%%%%%%%%%%%%%%%%%%%%%%%%%%%%%%%%%%%%%%%%%
  Indeed, the set of external edges in all graphs obtained during the 
  application of this series of NNI moves, transforming $H$ into~$H'$,
  contain $e'$ and $e''$.  Glue $e'$ and $e''$ into an edge in each 
  of these graphs, obtaining a sequence of connected graphs that is the 
  result of a series of NNI moves starting at~$G$ and ending at~$G'$.
  This ends the proof of \eqref{thm:connected:a}.
  $$\begin{array}{cccccc}
  G  & & & & G \\
  \ \ \Downarrow {\scriptscriptstyle{\mathrm{cut}}} & & & &  \ \ \Downarrow {\scriptscriptstyle{\mathrm{cut}}} \\
  H = H_0& \substack{{\scriptscriptstyle \mathrm{NNI}}\\\longleftrightarrow} & \cdots & \substack{{\scriptscriptstyle \mathrm{NNI}}\\\longleftrightarrow} & H_k = H' \\
  \ \ \ \Downarrow {\scriptscriptstyle{\mathrm{glue}}} & & \ \ \ \ \Downarrow {\scriptscriptstyle{\mathrm{glue}}} & &  \ \ \ \Downarrow {\scriptscriptstyle{\mathrm{glue}}} \\
  G = G_0& \substack{{\scriptscriptstyle \mathrm{NNI}}\\\longleftrightarrow} & \cdots & \substack{{\scriptscriptstyle \mathrm{NNI}}\\\longleftrightarrow} & G_k = G' \\
  \end{array}$$

  Similarly, by induction in the cycle space of $G$ and $G'$, one can prove \eqref{thm:connected:b}.
  Indeed, it is sufficient to choose in each step of the induction an edge to be cut which is not
  in either of the spanning trees.
\end{proof}

For the case of cubic graphs, the proof of Theorem~\ref{thm:connected} provides 
an alternative proof for a theorem by Tsukui~\cite[Thm.~II]{Tsukui1996}, which 
refers to NNI moves as $\tilde{S}$-transformations, where the `S' stands for \emph{slide}. 
For the slightly more general case of $\{1,3\}$-graphs, the proof of Theorem~\ref{thm:connected} 
provides an alternative proof for a proposition by Wakabayashi~\cite[Prop.~6.2]{Wakabayashi2013},
which refers to NNI moves as $A$-moves.  
Wakabayashi's proof uses a topological pants decomposition for compact, 
oriented surfaces of finite genus.

\begin{figure}
  \centering
  \scalebox{0.70}{\input{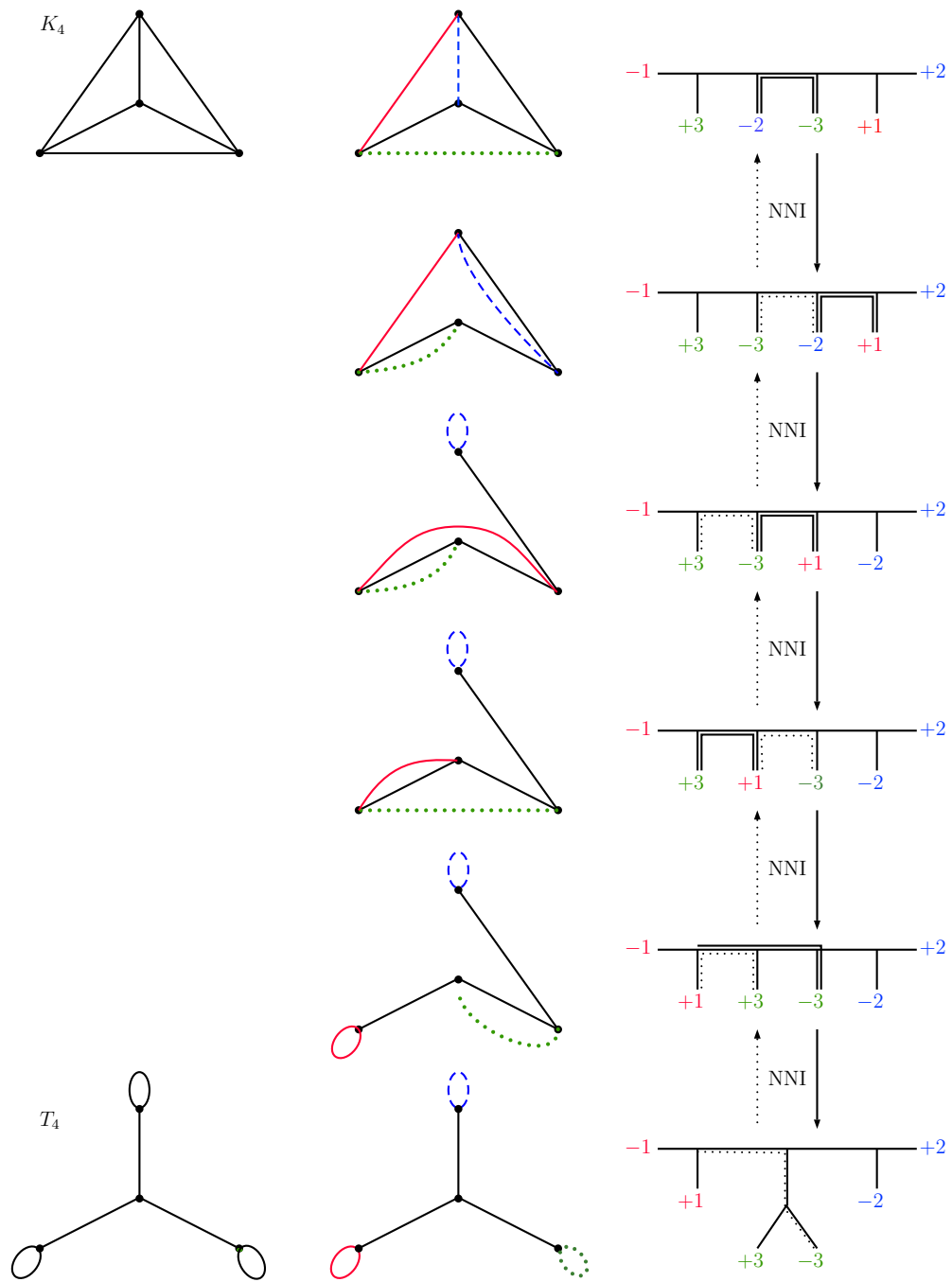}}
  \caption{A series of NNI moves from $K_4$ to the graph $T_4$.}
  \label{fig:sim}
\end{figure}

Figure~\ref{fig:sim} shows a series of NNI moves from the complete graph $K_4$ to a tree
with a loop added to each of its leaves.

% We end this section with a neat and well-known observation on NNI moves in $\{1,3\}$-trees. 
% An edge flip in a triangulation of a polygon corresponds to an NNI move in a $\{1,3\}$-tree. 
% Indeed, any such tree can be seen as the dual of a triangulation of a polygon.  See Figure~\ref{fig:triang}.
% Theorem~3.22 in~\cite{DevadossO2011} states that the flip graph is connected, which corresponds to Theorem~2.1 by 
% Culik and Wood~\cite{CulikW1982} (without the bound on the number of moves). 

% \begin{figure}[h]
%   \centering
%   \scalebox{.85}{\input{figs/triangulation.tex}}
%   \caption{An edge flip in a triangulated polygon corresponding to an NNI at the $\{1,3\}$-tree in (red) dotted lines.}
%   \label{fig:triang}
% \end{figure}

% \newpage

\section{Weighted NNIs for $\{1,3\}$-graphs}\label{sec:wnni} 

To deal with weights on the edges of a $\{1,3\}$-graph, we now enhance the NNI move. 
This was achieved by a bijection defined by Wakabayashi~\cite[Prop.~6.3]{Wakabayashi2013}.

Let $G=(V,E)$ be a $\{1,3\}$-graph and $w$ be a weight function defined on the edges of~$G$. 
A \emph{weighted NNI} is a local move performed in $(G,w)$ on a trail~$W$ of length three
induced by a NNI move in $G$ on $W$.
The result of the move is the graph~$G'$ obtained from~$G$ by applying an NNI move on $W$, 
and the weight function $w'$ defined on the edges of $G'$ as follows.

Let $e$ be the central edge of $W$, that is, the pivot of the NNI move. 
Let $a$ and $b$ be the other edges in $W$, and $c$ and~$d$ be the remaining 
edges adjacent to $e$, as depicted in Figure~\ref{fig:weighted-nni}. 
Possibly $a$, $b$, $c$, and~$d$ are not pairwise distinct.  
The weight function $w'$ is such that $w'_f = w_f$ for every~$f \neq e$ and
$$ w'_e = w_e + \max\{w_a+w_c,w_b+w_d\} - \max\{w_b+w_c,w_a+w_d\}. $$ 
Note that, if $w$ is integer valued, then so is $w'$. 
Moreover, since pivots are always internal edges and an NNI move does not affect the 
partition of the edges into internal and external, a weighted NNI move may only change the 
weights of internal edges.

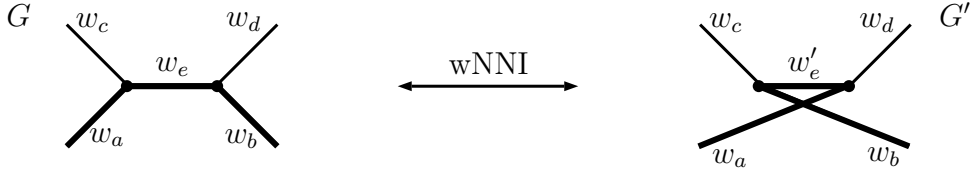
\begin{figure}[h]
  \centering
  % \usepackage[usenames,dvipsnames]{pstricks}
% \usepackage{epsfig}
% \usepackage{pst-grad} % For gradients
% \usepackage{pst-plot} % For axes
% \usepackage[space]{grffile} % For spaces in paths
% \usepackage{etoolbox} % For spaces in paths
% \makeatletter % For spaces in paths
% \patchcmd\Gread@eps{\@inputcheck#1 }{\@inputcheck"#1"\relax}{}{}
% \makeatother
% \psscalebox{1.0 1.0} % Change this value to rescale the drawing.
{
\begin{pspicture}(0,-1.0917871)(13.18,1.0917871)
\rput[bl](1.98,0.2041504){$w_e$}
\psline[linecolor=black, linewidth=0.04](1.6,0.08415039)(2.8,0.08415039)(3.6,0.8841504)
\psline[linecolor=black, linewidth=0.04](0.8,0.8841504)(1.6,0.08415039)(0.8,-0.7358496)
\rput[bl](2.9,-0.7558496){$w_b$}
\rput[bl](2.92,0.8041504){$w_d$}
\rput[bl](1.1,-0.7358496){$w_a$}
\rput[bl](0.92,0.78415036){$w_c$}
\psline[linecolor=black, linewidth=0.04](2.8,0.08415039)(3.6,-0.71584964)
\rput[bl](10.38,0.16415039){$w'_e$}
\psline[linecolor=black, linewidth=0.04](10.02,0.08415039)(11.22,0.08415039)(12.02,0.8841504)
\psline[linecolor=black, linewidth=0.04](9.22,0.8841504)(10.02,0.08415039)(12.0,-0.7358496)
\rput[bl](11.44,-0.9958496){$w_b$}
\rput[bl](11.34,0.78415036){$w_d$}
\rput[bl](9.38,-1.0158496){$w_a$}
\rput[bl](9.34,0.78415036){$w_c$}
\psline[linecolor=black, linewidth=0.08](11.22,0.08415039)(9.2,-0.7358496)
\psdots[linecolor=black, dotsize=0.16](1.6,0.06415039)
\psdots[linecolor=black, dotsize=0.16](2.8,0.06415039)
\psdots[linecolor=black, dotsize=0.16](10.0,0.06415039)
\psdots[linecolor=black, dotsize=0.16](11.2,0.06415039)
\psline[linecolor=black, linewidth=0.08](11.2,0.06415039)(10.0,0.06415039)
\psline[linecolor=black, linewidth=0.08](10.0,0.06415039)(12.0,-0.7358496)
\psline[linecolor=black, linewidth=0.08](1.6,0.06415039)(2.8,0.06415039)
\psline[linecolor=black, linewidth=0.08](2.8,0.06415039)(3.6,-0.7358496)
\psline[linecolor=black, linewidth=0.08](0.8,-0.7358496)(1.6,0.06415039)
\rput[bl](0.0,0.8641504){$G$}
\rput[bl](12.4,0.8641504){$G'$}
\psline[linecolor=black, linewidth=0.04, arrowsize=0.05291667cm 2.0,arrowlength=1.4,arrowinset=0.0]{<->}(5.2,0.06415039)(7.6,0.06415039)
\rput[bl](5.88,0.22415039){wNNI}
\end{pspicture}
}
  \caption{Edges and weights in a weighted nearest neighbor interchange.}
  \label{fig:weighted-nni}
\end{figure}

We think of a weighted NNI move as a function $\phi_{W}(G,w) = (G',w'),$
which extends the previously defined NNI move, $\gamma_W(G) = G'$. 
Note that $\phi_{W}(G',w') = (G,w)$, because $\gamma_W(G')=G$ and 
\begin{align}
  & w'_e + \max\{w'_b+w'_c,w'_a+w'_d\} - \max\{w'_a+w'_c,w'_b+w'_d\}  = \nonumber \\ 
  & w'_e + \max\{w_b+w_c,w_a+w_d\} - \max\{w_a+w_c,w_b+w_d\}  = \nonumber \\
  & w_e + \max\{w_a+w_c,w_b+w_d\} - \max\{w_b+w_c,w_a+w_d\} \nonumber \\
  & \phantom{w_e} + \max\{w_b+w_c,w_a+w_d\} - \max\{w_a+w_c,w_b+w_d\}  = w_e. \nonumber % \\
%  & w_e \nonumber
\end{align}

Wakabayashi~\cite[Prop. 6.3]{Wakabayashi2013} proved that $w \in t\calP_G$ if and only if 
$w' \in t\calP_{G'}$ for every integer $t \ge 0$, which implies Liu and Osserman's conjecture.  
We observe that this holds also for every real $t \ge 0$, and state it below as we will use 
it in the next section. 

\begin{lem}\label{lem:cone-involution}
  Let $G$ be a $\{1,3\}$-graph and $w$ be a weight function defined on the edges of~$G$.
  Let $W$ be a trail in $G$ of length three and suppose that $\phi_W(G,w)=(G',w')$. 
  Then $w \in t\calP_G$ if and only if $w' \in t\calP_{G'}$ for every $t \ge 0$. \hfill \qed
\end{lem}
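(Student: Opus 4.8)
The plan is to localize the comparison to the two endpoints of the pivot and then reduce everything to two elementary identities about maxima of the edge weights. First I would observe that $w$ and $w'$ agree on every coordinate except $e$, and that $e$ is incident only to its two endpoints, say $u$ and $v$. An NNI move changes the incidence function only at $u$ and $v$ and leaves all edge weights other than $w_e$ untouched. Consequently, among the defining inequalities of $t\calP_G$ and of $t\calP_{G'}$ (one perimeter inequality and three metric inequalities per degree-three vertex), all those attached to vertices other than $u$ and $v$ are literally the same condition on the same shared coordinates, and none of them involves $w_e$ or $w'_e$. Hence $w\in t\calP_G \Leftrightarrow w'\in t\calP_{G'}$ reduces to a purely local statement in the reals $w_a,w_b,w_c,w_d,w_e,t$: that the four inequalities at $u$ and the four at $v$ in $G$ (evaluated at $w$) are equivalent to the corresponding eight inequalities in $G'$ (evaluated at $w'$).

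Second, I would rewrite these local constraints as interval conditions on the pivot weight. The three metric inequalities at a degree-three vertex with incident edges $e,x,y$ are equivalent to $|w_x-w_y|\le w_e\le w_x+w_y$, and the perimeter inequality reads $w_e\le t-w_x-w_y$. In $G$ the edges at $u$ are $e,a,c$ and at $v$ are $e,b,d$, so the constraints at $u,v$ hold iff $L\le w_e\le U$, where $L=\max\{|w_a-w_c|,|w_b-w_d|\}$ and $U=\min\{w_a+w_c,\,w_b+w_d,\,t-w_a-w_c,\,t-w_b-w_d\}$. In $G'$ the edges at $u$ are $e,b,c$ and at $v$ are $e,a,d$, so the constraints on $w'_e$ read $L'\le w'_e\le U'$ with $L'=\max\{|w_b-w_c|,|w_a-w_d|\}$ and $U'=\min\{w_b+w_c,\,w_a+w_d,\,t-w_b-w_c,\,t-w_a-w_d\}$. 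Writing $M_1=\max\{w_a+w_c,\,w_b+w_d\}$, $M_2=\max\{w_b+w_c,\,w_a+w_d\}$, $\Sigma=w_a+w_b+w_c+w_d$ and $\delta=M_1-M_2$, the defining formula of the weighted NNI is exactly $w'_e=w_e+\delta$, so it suffices to prove that the shift $w_e\mapsto w_e+\delta$ carries $[L,U]$ onto $[L',U']$, i.e.\ that $U'=U+\delta$ and $L'=L+\delta$.

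Third, I would establish these two equalities. The upper-bound equality is immediate from $(w_a+w_c)+(w_b+w_d)=(w_b+w_c)+(w_a+w_d)=\Sigma$: the two metric upper bounds give $\min\{w_a+w_c,w_b+w_d\}=\Sigma-M_1$ and the two perimeter upper bounds give $\min\{t-w_a-w_c,\,t-w_b-w_d\}=t-M_1$, whence $U=\min\{\Sigma,t\}-M_1$; the identical computation in $G'$ yields $U'=\min\{\Sigma,t\}-M_2$, so $U'=U+\delta$. The lower-bound equality $L'=L+\delta$ is the heart of the matter, and I would prove the equivalent symmetric form $L+M_1=L'+M_2$, that is
\[
  \max\{|w_a-w_c|,|w_b-w_d|\}+\max\{w_a+w_c,w_b+w_d\}
   =\max\{|w_b-w_c|,|w_a-w_d|\}+\max\{w_a+w_d,w_b+w_c\}.
\]
Setting $S_1=w_a+w_c$, $S_2=w_b+w_d$, $D_1=w_a-w_c$, $D_2=w_b-w_d$ and using the standard identities $\max\{p,q\}=\tfrac12(p+q+|p-q|)$, $\max\{|X-Y|,|X+Y|\}=|X|+|Y|$, and $\tfrac12(|p+q|+|p-q|)=\max\{|p|,|q|\}$, both sides collapse to $\max\{S_1,S_2\}+\max\{|D_1|,|D_2|\}$, which proves the identity.

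Finally, since $U'=U+\delta$ and $L'=L+\delta$, the affine shift $w_e\mapsto w_e+\delta=w'_e$ satisfies $w_e\in[L,U]\Leftrightarrow w'_e\in[L',U']$ trivially, and this remains valid even when the intervals are empty; this is exactly the local equivalence, and together with the unchanged remaining inequalities it gives $w\in t\calP_G\Leftrightarrow w'\in t\calP_{G'}$ for every $t\ge 0$, including $t=0$. I expect the main obstacle to be the lower-bound identity displayed above: unlike the upper bounds, it does not follow from a single linear cancellation and genuinely requires the absolute-value manipulations. A secondary point to be careful about is that $a,b,c,d$ need not be pairwise distinct (parallel edges or loops at $u$ or $v$), but since every step is an identity in the real weight values, the argument survives any such coincidence unchanged.
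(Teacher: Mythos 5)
Your proposal is correct, and it is genuinely more self-contained than what the paper does: the paper states Lemma~\ref{lem:cone-involution} with no proof at all, attributing the integer-$t$ case to Wakabayashi [Prop.~6.3] (whose argument is embedded in a longer development) and merely ``observing'' that the statement extends to all real $t \ge 0$. You instead supply a direct verification. Your localization step is sound --- the NNI alters the incidence function only at the two endpoints $u,v$ of the pivot, and only the coordinate $w_e$ changes, so every constraint attached to another vertex is literally identical in $G$ and $G'$ --- and your rewriting of the four constraints at each of $u$ and $v$ as the two-sided bound $L \le w_e \le U$ is an exact equivalence of the linear systems, not just an implication. The upper-bound computation $U = \min\{\Sigma, t\} - M_1$, $U' = \min\{\Sigma, t\} - M_2$ is right, and I checked your key lower-bound identity
\[
\max\{|w_a-w_c|,|w_b-w_d|\}+\max\{w_a+w_c,w_b+w_d\}
=\max\{|w_b-w_c|,|w_a-w_d|\}+\max\{w_b+w_c,w_a+w_d\}:
\]
under the substitution $S_i, D_i$ both sides reduce to $\max\{S_1,S_2\}+\max\{|D_1|,|D_2|\}$ via the standard absolute-value identities you cite, so $L' = L + \delta$, $U' = U + \delta$, and the shift $w_e \mapsto w_e + \delta$, which is precisely the weighted-NNI update $w'_e = w_e + M_1 - M_2$, carries the feasibility condition at $u,v$ in $G$ onto that in $G'$. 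Your closing remark about coincidences among $a,b,c,d$ is also justified: in a $\{1,3\}$-graph the pivot cannot be a loop and $a \ne b$ by trail-distinctness, while the possible identifications ($a=d$, $b=c$, $c=d$, all parallel to $e$) only identify variables in what is an identity in real parameters, and one can check against the paper's own loop example (Figure~\ref{fig:involution}) that the degenerate constraint systems agree with your formulas. What your route buys over the paper's citation is a uniform, elementary proof valid for every real $t \ge 0$ at once (including $t=0$ and empty intervals), which is exactly the strengthening the paper needs for Lemma~\ref{lem:cone-involution} but leaves unproved.
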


Therefore there are distinct rational polytopes whose Ehrhart quasi-polynomials coincide for all real $t$.

As a weighted NNI changes only the weight of the pivot, which is always an
internal edge, the series of NNI moves from $G$ to $G'$ changes only the
weights of internal edges.  In fact, by Theorem~\ref{thm:connected}\eqref{thm:connected:b},
one can choose a spanning tree $T$ in~$G$ and a spanning tree~$T'$ in~$G'$
and require that the series of NNI moves uses as pivots only internal edges
of $T$ and $T'$.  As a consequence, the bijection from $\calP_G$ to
$\calP_{G'}$ keeps fixed the majority of the coordinates of the points.
Namely, it changes only coordinates that correspond to internal edges of the chosen
spanning trees.
Formally, the latter discussion provides the following as a corollary 
of Theorem~\ref{thm:connected}\eqref{thm:connected:b} and Lemma~\ref{lem:cone-involution}.
Let ${E = \{1,\ldots,m\}}$ and  $w: E \rightarrow \IR$ be a
weight function defined on the edges of~$G$.
If $X$ is a subset of $E$, then the \emph{restriction of $w$ to $X$} is the function
$w\vert^{}_X: X \rightarrow \IR$ such that $w\vert^{}_X(x) = w(x)$ for all $x \in X$.

\begin{cor}
  Let $G$ and $G'$ be connected $\{1,3\}$-graphs with the same number of vertices and on the same set $E$ of edges.
  Then there exists a bijection $\phi$ between $\calP_G$ and $\calP_{G'}$ such that, for $w'=\phi(w)$, 
  we have that $$w'\vert^{}_{X} = w\vert^{}_{X}$$ 
  where $E \setminus X$ is the set of internal edges of arbitrary spanning trees in $G$ and $G'$. 
\end{cor}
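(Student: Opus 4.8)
The plan is to lift the purely combinatorial transformation furnished by Theorem~\ref{thm:connected}\eqref{thm:connected:b} to a transformation of weight functions by means of the weighted NNI of Section~\ref{sec:wnni}, and then to extract the fixed coordinates from the fact that a weighted NNI alters only the weight of its pivot. First I would verify the hypotheses of Theorem~\ref{thm:connected}\eqref{thm:connected:b}. As $G$ and $G'$ are $\{1,3\}$-graphs on the same vertex set and the same edge set~$E$, they have the same number~$n$ of vertices and~$m$ of edges; a degree count shows that each has exactly $(3n-2m)/2$ leaves, so $G$ and $G'$ share the same degree sequence, and, under the fixed identification of the common edge set~$E$ that is implicit in comparing $\calP_G$ and $\calP_{G'}$ coordinate-wise, the same set of external edges. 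Theorem~\ref{thm:connected}\eqref{thm:connected:b} then provides a spanning tree~$T$ of~$G$, a spanning tree~$T'$ of~$G'$, and a series of NNI moves $\gamma_{W_1},\ldots,\gamma_{W_k}$ producing graphs $G = G_0, G_1, \ldots, G_k = G'$ with $G_i = \gamma_{W_i}(G_{i-1})$, such that every pivot $e_i$ of $W_i$ is an internal edge of both $T$ and $T'$. Each $G_i$ is again a $\{1,3\}$-graph on the edge set~$E$, so the weighted NNI is defined at each step.

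Next I would replace each $\gamma_{W_i}$ by the weighted NNI $\phi_{W_i}$ of Section~\ref{sec:wnni}, viewed as a map on weight functions $w\colon E \to \IR$ that fixes every coordinate other than the one indexed by the pivot $e_i$ (Figure~\ref{fig:weighted-nni}). Taking the weight component of the composition $\phi_{W_k}\circ\cdots\circ\phi_{W_1}$ yields a map $\phi\colon \IR^E \to \IR^E$; it is a bijection because each $\phi_{W_i}$ is its own inverse, as verified by the involution identity following Figure~\ref{fig:weighted-nni}. Applying Lemma~\ref{lem:cone-involution} with $t = 1$ to each individual step shows that $\phi_{W_i}$ maps $\calP_{G_{i-1}}$ bijectively onto $\calP_{G_i}$; composing these, $\phi$ restricts to a bijection between $\calP_G$ and $\calP_{G'}$.

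It remains to identify the fixed coordinates. Let $E \setminus X$ be the set of internal edges of the spanning trees $T$ and $T'$, so that every pivot $e_i$ lies in $E \setminus X$. Since $\phi_{W_i}$ changes only the coordinate indexed by $e_i$, no coordinate in $X$ is altered at any step of the composition, whence $w'\vert^{}_X = w\vert^{}_X$ for $w' = \phi(w)$. The argument is essentially bookkeeping on top of the quoted results; the one point requiring care is that the ``only the pivot changes'' property of a single weighted NNI must survive the composition, and it does precisely because Theorem~\ref{thm:connected}\eqref{thm:connected:b} confines all the pivots to $E \setminus X$. I expect this confinement of the pivots---rather than any estimate or explicit construction---to be the crux, everything else being a direct appeal to Theorem~\ref{thm:connected}\eqref{thm:connected:b} and Lemma~\ref{lem:cone-involution}.
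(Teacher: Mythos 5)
Your proposal is correct and follows essentially the same route as the paper, which derives this corollary directly from Theorem~\ref{thm:connected}\eqref{thm:connected:b} together with Lemma~\ref{lem:cone-involution}: compose the weighted NNIs along a pivot-restricted NNI sequence and observe that each step alters only the pivot coordinate, which lies among the internal edges of the chosen spanning trees. Your explicit degree count verifying the hypotheses of Theorem~\ref{thm:connected}\eqref{thm:connected:b} and your remark on identifying the external edge sets under the common labelling of $E$ are details the paper leaves implicit, but they do not change the argument.
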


% \newpage 

%%%%%%%%%%%%%%%%%%%%%%%%%%%%%%%%%%%%%%%%%%%%%%%%%%%%%%%%%%%%%%%%%%%%%%%%%%
%%
%%  Decomposition
%%
%%%%%%%%%%%%%%%%%%%%%%%%%%%%%%%%%%%%%%%%%%%%%%%%%%%%%%%%%%%%%%%%%%%%%%%%%%%
\section{Scissors congruence}\label{sec:decomp}

%% A matrix $U$ is called \emph{unimodular} if all of its entries are integers 
%% and $U$ has determinant~$\pm 1$.  An \emph{affine unimodular transformation} 
%% is defined by $x \rarr Ux+b$, where $U$ is a unimodular matrix and $b$ is a real vector. 

% A necessary and sufficient condition for a linear transformation to map a lattice 
% to itself is its matrix to be unimodular.

Haase and McAllister~\cite{haase2008} have raised Question~\ref{qst:haase} that 
can be thought of as an analogue of Hilbert's third problem (equidecomposability) 
for the unimodular group. 
We show that for polytopes associated to $\{1,3\}$-graphs a 
statement similar to Question~\ref{qst:haase} holds.

%% \begin{question}\!\!\textnormal{\cite[Question~4.1]{haase2008}}
%%   Suppose that $\calP$ and $\calP'$ are polytopes with the same Ehrhart quasi-polynomial.
%%   Is it true that there is
%%   %%%%%%%%%%%%%%%%%%%%%%%%%%%%%%%%%%%%%%%%%%%%%%%%%%%%%%%%%%%%%%%%%%%
%%   a decomposition of~$\calP$ into relatively open simplices $\calP^1,\ldots,\calP^k$
%%   %%%%%%%%%%%%%%%%%%%%%%%%%%%%%%%%%%%%%%%%%%%%%%%%%%%%%%%%%%%%%%%%%
%%   and affine unimodular transformations $U^1,\ldots,U^k$
%%   %%%%%%%%%%%%%%%%%%%%%%%%%%%%%%%%%%%%%%%%%%%%%%%%%%%%%%%%%%%%%%%
%%   such that $\calP'$ is the disjoint union of $U^1(\calP^1),\ldots,U^r(\calP^r)$?
%% \end{question}

Let $G$ be a $\{1,3\}$-graph with edge set $E=\{1,\ldots,m\}$. 
Let $W$ be a trail in $G$ of length three and suppose that $\phi_W(G,w)=(G',w')$. 
As argued ahead, the function $\phi_W(G,\cdot) : \mathbb R^m \rarr \mathbb R^m$ is associated 
to one or two hyperplanes in $\mathbb R^m$ and two or four unimodular transformations. 
For short, let $\phi(w) = \phi_W(G,w)$ for every $w \in \mathbb R^m$. 
Let $a$, $b$, $c$, $d$, and~$e$ be as in Figure~\ref{fig:weighted-nni}, 
with $a$, $e$, and $b$ being the edges of $W$. 
Clearly $\phi$ is piecewise linear, namely, for $w \in \mathbb R^m$, 
$w'_f = w_f$ for every $f \neq e$ and 
\begin{subequations}
  \begin{align}%{ll}
    w'_e & =  w_e  + w_b - w_d \, \quad \mbox{if $w_a+w_b \geq w_c+w_d$ and $w_a+w_d \geq w_b+w_c$}, \label{eq:uniA}\\ 
    w'_e & =  w_e  + w_a - w_c \, \quad \mbox{if $w_a+w_b \geq w_c+w_d$ and $w_a+w_d < w_b+w_c$}, \label{eq:uniB}\\ 
    w'_e & =  w_e  + w_c - w_a \, \quad \mbox{if $w_a+w_b < w_c+w_d$    and $w_a+w_d \geq w_b+w_c$},\label{eq:uniC}\\ 
    w'_e & =  w_e  + w_d - w_b \, \quad \mbox{if $w_a+w_b < w_c+w_d$    and $w_a+w_d < w_b+w_c$}. \label{eq:uniD}
  \end{align}
\end{subequations}
%%%%%%%%%%%%%%%%%%%%%%%%%%%%%%%%%%%%%%%%%%%%%%%%%%%%%%%%%%%%%%%%%%%%%%%%%%%%%%%%%%%%%%%%%%%%%%%%% 
%% \begin{eqnarray}
%%   w'_e & = & w_e  + \, \left\{\begin{array}{ll}
%%       w_b - w_d & \mbox{if $w_a+w_b \geq w_c+w_d$ and $w_a+w_d \geq w_b+w_c$}, \label{eq:1} \\ 
%%       w_a - w_c & \mbox{if $w_a+w_b \geq w_c+w_d$ and $w_a+w_d < w_b+w_c$}, \label{eq:2} \\ 
%%       w_c - w_a & \mbox{if $w_a+w_b < w_c+w_d$    and $w_a+w_d \geq w_b+w_c$}, \label{eq:3} \\ 
%%       w_d - w_b & \mbox{if $w_a+w_b < w_c+w_d$    and $w_a+w_d < w_b+w_c$}. \label{eq:4}
%%     \end{array}\right.
%% \end{eqnarray}
%%%%%%%%%%%%%%%%%%%%%%%%%%%%%%%%%%%%%%%%%%%%%%%%%%%%%%%%%%%%%%%%%%%%%%%%%%%%%%%%%%%%%%%%%%%%%%% 
The hyperplanes associated to $\phi$ are $w_a+w_b-w_c-w_d = 0$ and $w_a-w_b-w_c+w_d = 0$, 
which are either the same hyperplane (if $a=b$ or $c=d$) or two orthogonal hyperplanes. 
Moreover, the matrix that gives the linear transformation in each case is unimodular.  Indeed, 
the matrix for case~\eqref{eq:uniA} is obtained from the identity matrix by substituting the row corresponding 
to the edge $e$ by the row $\chi^e + \chi^b - \chi^d$, 
the matrix for case~\eqref{eq:uniB} is obtained from the identity matrix by substituting the row corresponding 
to the edge $e$ by the row $\chi^e + \chi^a - \chi^c$, 
the matrix for case~\eqref{eq:uniC} is obtained from the identity matrix by substituting the row corresponding 
to the edge $e$ by the row $\chi^e + \chi^c - \chi^a$, and
the matrix for case~\eqref{eq:uniD} is obtained from the identity matrix by substituting the row corresponding 
to the edge $e$ by the row $\chi^e + \chi^d - \chi^b$.  
Thus the determinant of each such matrix is always~$1$.  
Therefore each of them is unimodular. 

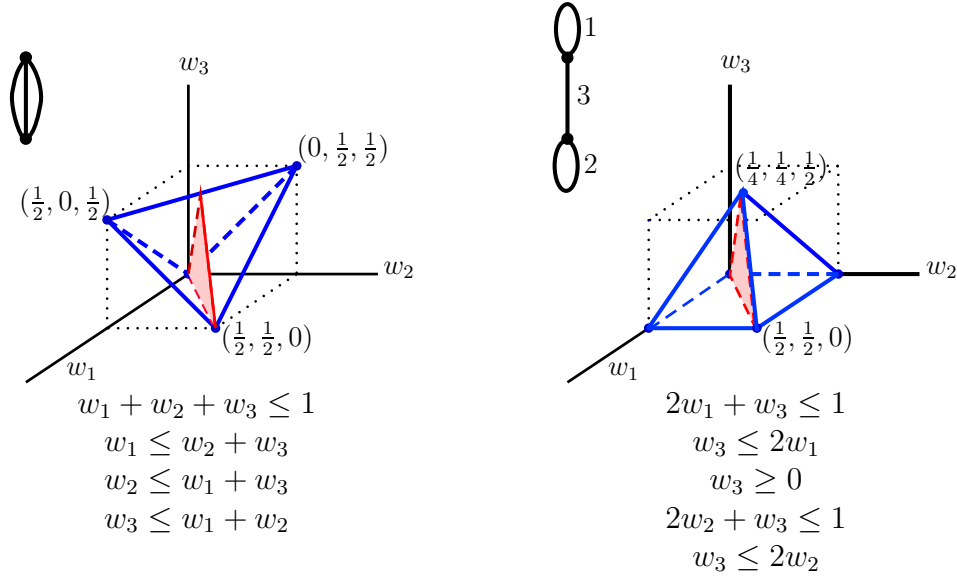
\begin{figure}[htb]
  \centering
  \scalebox{.9}{\psscalebox{1.0 1.0} % Change this value to rescale the drawing.
{
\begin{pspicture}(0,-2.8183231)(13.963251,2.8183231)
\definecolor{colour4}{rgb}{1.0,0.0,0.2}
\definecolor{colour0}{rgb}{0.0,0.0,0.8}
\definecolor{colour1}{rgb}{0.0,0.2,1.0}
\definecolor{colour2}{rgb}{1.0,0.8,0.8}
\definecolor{colour3}{rgb}{0.8,0.0,0.0}
\psline[linecolor=black, linewidth=0.06](13.463251,-1.201677)(12.263251,-1.201677)
\psline[linecolor=black, linewidth=0.04](2.6632512,1.5983231)(2.6632512,-1.201677)(5.463251,-1.201677)
\psline[linecolor=black, linewidth=0.04](2.6632512,-1.201677)(0.2632512,-2.801677)
\psline[linecolor=black, linewidth=0.04, linestyle=dotted, dotsep=0.10583334cm](1.4632512,-2.001677)(3.0632513,-2.001677)(4.2632513,-1.201677)
\psline[linecolor=black, linewidth=0.04, linestyle=dotted, dotsep=0.10583334cm](4.2632513,-1.201677)(4.2632513,0.39832306)(2.6632512,0.39832306)(1.4632512,-0.40167695)(1.4632512,-2.001677)
\psline[linecolor=blue, linewidth=0.06](1.4632512,-0.40167695)(4.2632513,0.39832306)(3.0632513,-2.001677)(1.4632512,-0.40167695)
\psline[linecolor=blue, linewidth=0.06, linestyle=dashed, dash=0.17638889cm 0.10583334cm](1.4632512,-0.40167695)(2.6632512,-1.201677)(4.2632513,0.39832306)
\psline[linecolor=colour4, linewidth=0.06, linestyle=dashed, dash=0.17638889cm 0.10583334cm](2.6632512,-1.201677)(3.0632513,-2.001677)
\psdots[linecolor=colour0, dotsize=0.14](4.2632513,0.39832306)
\psdots[linecolor=colour0, dotsize=0.14](1.4632512,-0.40167695)
\psdots[linecolor=colour0, dotsize=0.14](3.0632513,-2.001677)
\psdots[linecolor=colour0, dotsize=0.14](2.6632512,-1.201677)
\rput[bl](4.2632513,0.39832306){$(0,\frac12,\frac12)$}
\rput[bl](3.1432512,-2.401677){$(\frac12,\frac12,0)$}
\rput[bl](0.16325119,-0.40167695){$(\frac12,0,\frac12)$}
\psdots[linecolor=black, dotsize=0.18](0.2632512,1.9983231)
\psdots[linecolor=black, dotsize=0.18](0.2632512,0.79832304)
\psline[linecolor=black, linewidth=0.06](0.2632512,1.9983231)(0.2632512,0.79832304)
\psarc[linecolor=black, linewidth=0.06, dimen=outer](0.2632512,1.9983231){0.06}{0.0}{270.0}
\psbezier[linecolor=black, linewidth=0.06](0.2632512,0.79832304)(0.39956892,0.85121626)(0.4632512,1.2183231)(0.48325118,1.3383231)(0.5032512,1.458323)(0.46515942,1.7927502)(0.2632512,1.9983231)
\psbezier[linecolor=black, linewidth=0.06](0.28024504,2.000461)(0.11141365,1.8227481)(0.09501528,1.6478094)(0.06301463,1.4910246)(0.031013966,1.33424)(0.1198138,0.9603481)(0.21500681,0.8204667)
\psdots[linecolor=black, dotsize=0.18](8.263251,1.9983231)
\psdots[linecolor=black, dotsize=0.18](8.263251,0.79832304)
\psline[linecolor=black, linewidth=0.06](8.263251,1.9983231)(8.263251,0.79832304)
\psellipse[linecolor=black, linewidth=0.06, dimen=outer](8.263251,2.418323)(0.2,0.4)
\psellipse[linecolor=black, linewidth=0.06, dimen=outer](8.243251,0.39832306)(0.2,0.4)
\psline[linecolor=black, linewidth=0.04](9.463251,-2.001677)(8.263251,-2.801677)
\psline[linecolor=black, linewidth=0.04, linestyle=dotted, dotsep=0.10583334cm](9.463251,-2.001677)(11.0632515,-2.001677)(12.263251,-1.201677)
\psline[linecolor=black, linewidth=0.04, linestyle=dotted, dotsep=0.10583334cm](12.263251,-1.201677)(12.263251,0.39832306)(10.663251,0.39832306)(9.463251,-0.40167695)(9.463251,-2.001677)
\psline[linecolor=blue, linewidth=0.06](10.843251,0.018323058)(12.263251,-1.201677)
\psdots[linecolor=colour0, dotsize=0.02625](9.463251,-0.40167695)
\psdots[linecolor=colour0, dotsize=0.14](9.463251,-2.001677)
\psdots[linecolor=colour0, dotsize=0.14](10.663251,-1.201677)
\rput[bl](10.723251,0.11832306){$(\frac14,\frac14,\frac12)$}
\rput[bl](11.123251,-2.401677){$(\frac12,\frac12,0)$}
\psdots[linecolor=colour0, dotsize=0.14](12.263251,-1.201677)
\psline[linecolor=black, linewidth=0.04, linestyle=dotted, dotsep=0.10583334cm](9.463251,-0.40167695)(11.0632515,-0.40167695)(12.263251,0.39832306)
\rput[bl](8.503251,2.3183231){1}
\rput[bl](8.503251,0.29832307){2}
\rput[bl](8.403252,1.318323){3}
\rput[bl](8.863251,-2.761677){$w_1$}
\rput[bl](13.5632515,-1.2616769){$w_2$}
\rput[bl](10.523252,1.778323){$w_3$}
\psline[linecolor=colour1, linewidth=0.06](9.463251,-2.001677)(9.463251,-2.001677)(11.0632515,-2.001677)(12.263251,-1.201677)
\psline[linecolor=colour1, linewidth=0.06, linestyle=dashed, dash=0.17638889cm 0.10583334cm](10.663251,-1.201677)(12.263251,-1.201677)
\psline[linecolor=black, linewidth=0.06](10.663251,1.5983231)(10.663251,-1.1416769)
\rput[bl](0.8632512,-2.761677){$w_1$}
\rput[bl](2.5232513,1.7583231){$w_3$}
\rput[bl](5.543251,-1.2616769){$w_2$}
\psline[linecolor=colour3, linewidth=0.04, linestyle=dashed, dash=0.17638889cm 0.10583334cm, fillstyle=solid,fillcolor=colour2](2.6632512,-1.201677)(2.8432512,-0.02167694)(3.0632513,-2.001677)
\psline[linecolor=red, linewidth=0.04](2.8432512,-0.041676942)(3.0632513,-1.9816769)
\psline[linecolor=colour1, linewidth=0.04, linestyle=dashed, dash=0.17638889cm 0.10583334cm](10.623251,-1.221677)(9.463251,-2.021677)(9.463251,-2.021677)
\psline[linecolor=red, linewidth=0.04, linestyle=dashed, dash=0.17638889cm 0.10583334cm, fillstyle=solid,fillcolor=colour2](11.0632515,-2.041677)(10.663251,-1.221677)(10.823251,-0.041676942)(11.023252,-1.941677)
\psdots[linecolor=colour0, dotsize=0.14](11.0632515,-2.001677)
\psdots[linecolor=colour1, dotsize=0.14](10.863251,-0.0016769409)
\psline[linecolor=colour1, linewidth=0.06](9.443252,-2.021677)(10.843251,0.018323058)(11.0632515,-2.041677)
\end{pspicture}
}}
\begin{tabular}{cp{3.8cm}cp{2mm}}
$w_1 + w_2 + w_3 \le 1$ & &  $2w_1 + w_3 \le 1$ & \\
$w_1 \le w_2 + w_3$ & & $w_3 \le 2w_1$ & \\
$w_2 \le w_1 + w_3$ & & $w_3 \geq 0$ & \\
$w_3 \le w_1 + w_2$ & & $2w_2 + w_3 \le 1$ & \\
                   & & $w_3 \le 2w_2$ & 
\end{tabular}
  \caption{The polytopes of the two cubic graphs on two vertices.  
  The shaded triangles are the intersection of the polytopes with the hyperplane $w_1 = w_2$.}
  \label{fig:involution}
\end{figure}

Figure~\ref{fig:involution} shows an example with the two 3-regular graphs on two vertices and their polytopes.
The two graphs differ by one NNI move.  
The function $\phi(w) = w'$ between the two polytopes in Figure~\ref{fig:involution}
is defined by $w'_1 = w_1$, $w'_2 = w_2$, and 
\begin{eqnarray*}
 w'_3 & = & w_3 + \max\{w_1+w_2,w_1+w_2\} - \max\{2w_1,2w_2\} \\
      & = & w_3 + w_1 + w_2 - 2\max\{w_1,w_2\} \\
      & = & \left\{\begin{array}{ll}
                     w_3 - w_1 + w_2 & \mbox{if $w_1 \geq w_2$} \\
                     w_3 + w_1 - w_2 & \mbox{if $w_1 \leq w_2$},
                   \end{array}\right.
\end{eqnarray*}
which is piecewise unimodular.
In this case, only one hyperplane (the one containing the shaded triangle inside the polytopes, $w_1 = w_2$) 
splits the polytopes into two, each part being unimodularly equivalent to one of the parts of the other polytope.  
The corresponding unimodular transformations are given by 
\begin{center}
$ U_{w_1 \leq w_2} = \left(\begin{array}{rrr}
                             \textcolor{white}{-}1 & 0 & \textcolor{white}{-}0 \\
                              0 & 1 & 0 \\
                              1 & -1 & 1
                            \end{array}\right) $
\hspace{1cm} $ U_{w_1 \geq w_2} = \left(\begin{array}{rrr}
                              1 & \textcolor{white}{-}0 & \textcolor{white}{-}0 \\
                              0 & 1 & 0 \\
                              -1 & 1 & 1
                            \end{array}\right). $
\end{center}

Note that the polytope on the right side of Figure~\ref{fig:involution} has one more vertex 
than the other one, so that the combinatorial types of these piecewise unimodularly equivalent 
polytopes may differ.  
Observe how the extra vertex $(\frac14,\frac14,\frac12)$ is ``formed'' when going from 
the polytope on the left to the one on the right, and how it ``disappears'' when going in 
the other direction.  Also, the edge between the origin and the vertex $(\frac12,\frac12,0)$ 
in the polytope on the left is not an edge in the polytope on the right. 

Theorem~\ref{thm:decomposition} extends this for graphs that differ by a series of NNI moves. 
It relates to Question~\ref{qst:haase}.  

%\newpage

%% \begin{thm}%\label{thm:decomposition}
%%   Let $G$ and $G'$ be two connected $\{1,3\}$-graphs with the same number of vertices and edges.
%%   Then, there is
%%   %%%%%%%%%%%%%%%%%%%%%%%%%%%%%%%%%%%%%%%%%%%%%%%%%%%%%%%%%%%%%%%%%%%
%%   a dissection of $\calP_G$ into smaller polytopes $\calP_G^1,\ldots,\calP_G^k$
%%   %%%%%%%%%%%%%%%%%%%%%%%%%%%%%%%%%%%%%%%%%%%%%%%%%%%%%%%%%%%%%%%%%
%%   and affine unimodular transformations $U^1,\ldots,U^k$
%%   %%%%%%%%%%%%%%%%%%%%%%%%%%%%%%%%%%%%%%%%%%%%%%%%%%%%%%%%%%%%%%%
%%   such that $\calP_{G'}$ is the \ignore{disjoint} union of $U^1(\calQ_G^1),\ldots,U^k(\calQ_G^k)$.
%% \end{thm}

\begin{proof}[Proof of Theorem~\ref{thm:decomposition}]
  The proof is constructive.  
  By Theorem~\ref{thm:connected}, there exists a finite
  sequence of NNI moves that transforms~$G$ to~$G'$, say
  $\label{thm:connected:a}
  G \x
  = G_0 \x
  \substack{{\scriptscriptstyle \mathrm{NNI}}\\\longleftrightarrow} \x
  \cdots \x
  \substack{{\scriptscriptstyle \mathrm{NNI}}\\\longleftrightarrow} \x G_k 
  = 
  G'. \x
  $
  We shall explain the procedure for the first two NNI moves assuming that both underlying 
  weighted NNI moves are associated to only one hyperplane and consequently two unimodular transformations. 
  The other cases and the rest of the NNI moves follow in an analogous but more cumbersome fashion. 

\begin{figure}
  \centering
  \scalebox{.98}{\input{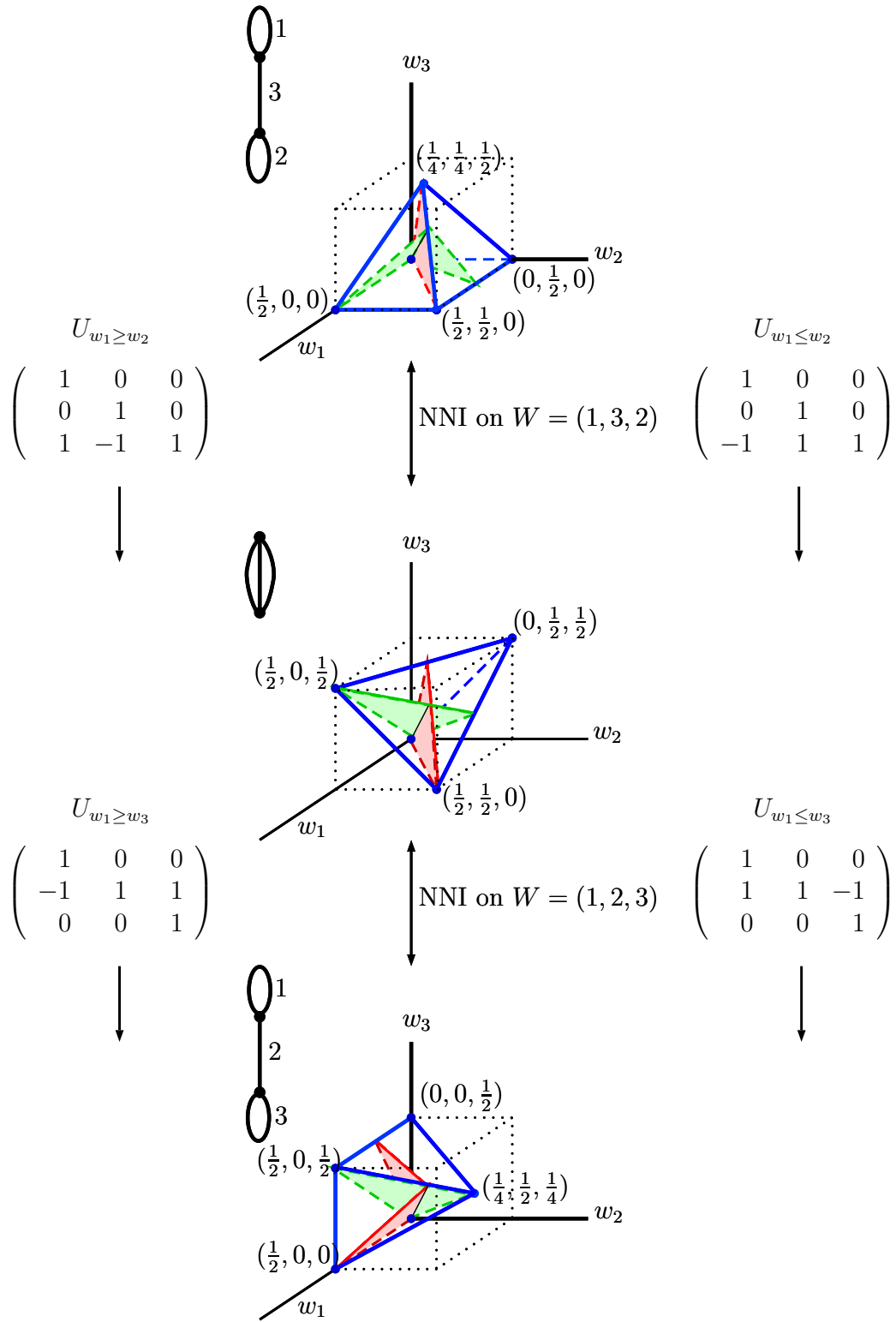}}
  \caption{Example for Theorem~\ref{thm:decomposition}.}
  \label{fig:unimodular}
\end{figure}

  Let $W_0$ be the trail used in the first NNI move, that goes from $G = G_0$ to $G_1$, 
  and let~$\phi_0(w) = \phi_{W_0}(G_0,w)$, for $w \in \mathbb R^E$, be the corresponding weighted NNI move.  
  Let $H_0$ be the hyperplane associated to $\phi_0$, and let $U_1$, $U_2$ be the two unimodular 
  transformations, one for each side of the hyperplane $H_0$.  The hyperplane $H_0$ dissects the 
  polytope~$\calP_G$ into two smaller polytopes, $\calQ^1_G$ and $\calQ^2_G$, so that 
  $\calP_G = \calQ_G^1 \cup \calQ_G^2$.  We now have that ${\calP_{G_1} = U_1(\calQ_G^1) \cup U_2(\calQ_G^2)}$.
  In words, we presented a dissection of $\calP_G$ into two smaller polytopes 
  and two affine unimodular transformations $U_1$ and $U_2$ that, if applied to the two smaller polytopes, 
  result in $\calP_{G_1}$.  Now we will proceed one more step, and present a dissection of $\calP_G$ 
  into four smaller polytopes, and four affine unimodular transformations that, if applied to the 
  four smaller polytopes, will result in $\calP_{G_2}$ (Figure~\ref{fig:unimodular}).

  Let $W_1$ be the trail used in the second NNI move, that goes from $G_1$ to $G_2$, 
  and let $\phi_1(w) = \phi_{W_1}(G_1,w)$, for $w \in \mathbb R^E$, be the corresponding weighted NNI move.  
  Let $H_1$ be the hyperplane associated to $\phi_1$, and let $T_1$, $T_2$ be the two unimodular 
  transformations, one for each side of the hyperplane $H_1$.  
  The hyperplane $H_1$ dissects the 
  polytope $\calP_{G_1}$ into two smaller polytopes, $\calQ^1_{G_1}$ and $\calQ^2_{G_1}$, 
  so that $\calP_{G_2} = T_1(\calQ_{G_1}^1) \cup T_2(\calQ_{G_1}^2)$. 

  Now, note that $H_1$ dissects $U_1(\calQ_G^1)$ and $U_2(\calQ_G^2)$ each into two smaller polytopes, 
  obtaining the dissection
  $$\calP_{G_1} = (\calQ_{G_1}^1 \cap U_1(\calQ_G^1))  \cup  (\calQ_{G_1}^1 \cap U_2(\calQ_G^2)) 
                  \cup (\calQ_{G_1}^2 \cap U_1(\calQ_G^1)) \cup (\calQ_{G_1}^2 \cap U_2(\calQ_G^2)).$$
  The latter naturally induces the following dissection:
  $$\calP_G = U_1^{-1}(\calQ_{G_1}^1 \cap U_1(\calQ_G^1)) \cup U_2^{-1}(\calQ_{G_1}^1 \cap U_2(\calQ_G^2)) 
             \cup U_1^{-1}(\calQ_{G_1}^2 \cap U_1(\calQ_G^1)) \cup U_2^{-1}(\calQ_{G_1}^2 \cap U_2(\calQ_G^2)).$$
  So, if we let \vspace{-3mm}
  \begin{eqnarray*}
    \calP^{11} & = & U_1^{-1}(\calQ_{G_1}^1 \cap U_1(\calQ_G^1)) \ = \ U_1^{-1}(\calQ_{G_1}^1) \cap \calQ_G^1, \\ 
    \calP^{12} & = & U_2^{-1}(\calQ_{G_1}^1 \cap U_2(\calQ_G^2)) \ = \ U_2^{-1}(\calQ_{G_1}^1) \cap \calQ_G^2, \\
    \calP^{21} & = & U_1^{-1}(\calQ_{G_1}^2 \cap U_1(\calQ_G^1)) \ = \ U_1^{-1}(\calQ_{G_1}^2) \cap \calQ_G^1, \\
    \calP^{22} & = & U_2^{-1}(\calQ_{G_1}^2 \cap U_2(\calQ_G^2)) \ = \ U_2^{-1}(\calQ_{G_1}^2) \cap \calQ_G^2,
  \end{eqnarray*}
  then 
  \begin{eqnarray*}
  \calP_G & = & \calP^{11} \cup \calP^{12} \cup \calP^{21} \cup \calP^{22}, \mbox{\ and } \\
  \calP_{G_2} & = & T_1(\calQ_{G_1}^1) \cup T_2(\calQ_{G_1}^2)\\
          & = & \left(T_1(\calQ_{G_1}^1 \cap U_1(\calQ_G^1)) \cup T_1(\calQ_{G_1}^1 \cap U_2(\calQ_G^2))\right) \\
          &   & \cup \left(T_2(\calQ_{G_1}^2 \cap U_1(\calQ_G^1)) \cup T_2(\calQ_{G_1}^2 \cap U_2(\calQ_G^2))\right)\\
          & = & \left(T_1U_1(\calP^{11}) \cup T_1U_2(\calP^{12})\right) \cup \left(T_2U_1(\calP^{21}) \cup T_2U_2(\calP^{22})\right).
          % & = & T_1U_1(\calP^{11}) \cup T_1U_2(\calP^{12}) \cup T_2U_1(\calP^{21}) \cup T_2U_2(\calP^{22}).
  \end{eqnarray*}
  This completes the proof for the two first weighted NNI moves assuming that both are
  associated to only one hyperplane and consequently two unimodular transformations. 

  For the remaining cases, whenever a weighted NNI move is associated to two hyperplanes 
  (and consequently four unimodular transformations), the polytopes would be dissected 
  into up to four smaller polytopes, but the process would be essentially the same. 
\end{proof}

%%%%%%%%%%%%%%%%%%%%%%%%%%%%%%%%%%%%%%%%%%%%%%%%%%%%%%%%%%%%%%%%%%%%%%%%%%
%%
%%  REFLEXIVITY
%%
%%%%%%%%%%%%%%%%%%%%%%%%%%%%%%%%%%%%%%%%%%%%%%%%%%%%%%%%%%%%%%%%%%%%%%%%%%%

\section{Reflexivity}
\label{sec:reflexivity}

%% Let $\mathbbm{1}_d$ be the $d$-dimensional vector whose coordinates are all
%% equal to $1$.
%% When the dimension~$d$ is clear from the context, we write simply
%% $\mathbbm{1}$.

An equivalent way to define a reflexive polytope is to require that it is integral 
and has the hyperplane description $\{x \in \IR^d \ | \ Ax \le \mathbbm{1} \}$ for some
integral matrix~$A$.  Another equivalent definition of reflexivity is to say that 
a polytope $\calP$ is reflexive if and only if the origin is in $\calP^\circ$ and 
$(t+1)\calP^\circ \cap \ZZ^d \ = \ t\calP \cap \ZZ^d \mbox{\ \ for all $t \in \ZZ_{\ge 0}$},$
where $\calP^\circ$ is the interior of~$\calP$.

\begin{proof}[Proof of Theorem~\ref{thm:reflexive}]
The polytope $4\calP_G{-}\mathbbm{1}$ consists of the vectors $w \in \IR^m$
satisfying
\begin{eqnarray*}
   \ph{+}w_a + w_b + w_c & \leq & 1 \\
   \ph{+}w_a - w_b - w_c & \leq & 1 \\
   -w_a + w_b - w_c & \leq & 1 \\
   -w_a - w_b + w_c & \leq & 1,
\end{eqnarray*}
for each degree three vertex $v$ and edges $a$,~$b$, and $c$ incident to
$v$,
for a total of $4n_3$ inequalities, where $n_3$ is the number of degree
three vertices in $G$.
From Liu and Osserman~\cite[Prop.~3.5]{LiuO2006}, the polytope $4\calP_G$
is integral, and thus so is
$4\calP_G{-}\mathbbm{1}$.
\end{proof}

The reflexivity of the integral polytope $4\calP_G{-}\mathbbm{1}$ has some
interesting consequences, as follows.
Let  $m$ denote the number of edges of $G$.  Since $4\calP_G$ is an
$m$-dimensional integral polytope, we have that for $t \equiv 0 \mod 4$:
\[
L_{\calP_G}(t)  = \binom{t+m}{m} + h_1^* \binom{t+m-1}{m} + \cdots +
h_{d-1}^* \binom{t+1}{m} + h_d^*\binom{t}{d},
\]
where $h_1^*, \ldots, h_{d-1}^*, h_d^*$ are the coefficients of the numerator 
of the rational function given by the Ehrhart series of $4\calP_G$~\cite[Lemma~3.14]{niceperson}.

The polytopes $4\calP_G$ and $4\calP_G-\mathbbm{1}$ have the same Ehrhart
polynomial.
Hence it follows from the reflexivity of  $4\calP_G{-}\mathbbm{1}$ and from
Hibi's palindromic
theorem~\cite[Thm.~4.6]{niceperson}
that $h_k^* = h_{m-k}^*$ for all $0 \leq k \leq m/2$.  Therefore it follows
that
to describe $L_{4\calP_G}(t)$,  we only need to compute half of its
coefficients, thus lowering the computational complexity required to
compute $L_{4\calP_G}(t)$.

%%%%%%%%%%%%%%%%%%%%%%%%%%%%%%%%%%%%%%%%%%%%%%%%%%%%%%%%%%%%%%%%%%%%%%%%%%%
\section{Concluding remarks and open problems}\label{sec:remarks}

In the process of proving the main theorem, a great deal of structure of the
polytopes~$\calP_G$, and their corresponding cones has been revealed.
%%%%%%%%%%%%%%%%%%%%%%%%%%%%%%%%%%%%%%%%%%%%%%%%%%%%%%%%%%%%%%%%%%%%%%%%%%%%%%%%%
The polytopes $\calP_G$ and their corresponding cones may be related to the 
well-known metric polytopes and metric cones~\cite{Deza:2009}.
In particular, in the case that $G$ is a planar graph, the polytope $\calP_G$ 
can be thought of as a restricted type of ``metric polytope''~\cite{Laurent1996},
associated to the dual graph $G^*$.  
It would be interesting to pursue connections to metric polytopes. 

%%%%%%%%%%%%%%%%%%%%%%%%%%%%%%%%%%%%
% Recall that if $G$ is a cubic graph on $n$ vertices, then $G$ has
% $3n/2$ edges.
%%%%%%%%%%%%%%%%%%%%%%%%%%%%%%%%%%%
Wakabayashi~\cite{Wakabayashi2013} gave a formula for two of the four constituent polynomials of
the Ehrhart quasi-polynomial for $\calP_G$ and for the volume of
$\calP_G$ when $G$ is a connected cubic graph on~$n$ vertices.
It is rather surprising that the Verlinde formula makes
an appearance here, namely, Wakabayashi showed that, for odd~$t$,
$$ L_{\calP_G}(t) \ = \ \frac{(t+2)^{n/2}}{2^{n+1}} \sum_{j=1}^{t+1} \frac1{\sin^n\big(\frac{\pi j}{t+2}\big)} 
\, \, \mbox{  and  } \, \, 
\vol(\calP_G) \ = \ \frac{|B_n|}{2\,n!}, $$
where $B_n$ is the $n$-th Bernoulli number.
Zagier~\cite{Zagier1996} showed that the above Verlinde formula 
is the polynomial
$$ L_{\calP_G}(t) \ = \ \frac{(t+2)^{n/2}}{2^{n+1}} \sum_{k=0}^{n/2} \frac{(-1)^{k-1}2^{2k}B_{2k}}{(2k)!}c_k(t+2)^{2k}, $$
where $c_k$ is the coefficient of $x^{-2k}$ in the Laurent expansion of $\sin^{-n}x$ at $x = 0$.
%%%%%%%%%%%%%%%%%%%%%%%%%%%%%%%%%%%%%%%%%%%%%%%%%%%%%%%%%%%%%%%
It would be interesting to determine if a similar formula
holds for all connected $\{1,3\}$-graphs.
%%%%%%%%%%%%%%%%%%%%%%%%%%%%%%%%
%% Although Wakabayashi gave a closed form (in terms of residues of known
%% functions) for two of the four constituent polynomials of the
%% quasipolynomial of $\calP_G$, for the case of a purely cubic graph $G$,
%% this problem is still open for the other two constituent polynomials, and
%% moreover it is open for general $\{1,3\}$-graphs.
Even when $G$ is a
$\{1,3\}$-tree, it is completely open, and quite interesting, to determine
the Ehrhart quasi-polynomial of $\calP_G$.
We determined the following quasi-polynomials  for some small trees with
LattE~\cite{latte}.

%% %%%%%%%%%%%%%%%%%%%%%%%%%%%%%%%%%%%%%%%%%%%%%%%%%%%%%%%%%%%%%%%%%%%%%%%%%%%%%%%
%% Our methods do not yet enable an explicit ``closed-form'' computation of 
%% the Ehrhart quasi-polynomial of $\calP_G$, not even when~$G$ is a $\{1,3\}$-tree.
%% We determined these polynomials for some small graphs with LattE~\cite{latte}.

\begin{center}
\noindent
\begin{tabular}{|>{\centering\arraybackslash}m{0.8cm}|l|} \hline
  $G$ & Ehrhart polynomial $\calP_G$\\ \hline
  \includegraphics[height=0.8cm]{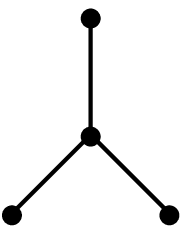}&%
  $\frac{1}{24} t^3 + \frac{1}{4} t^2 + \,
    \left\{\begin{array}{ll}
    \frac{5}{6} t + 1,   & \mbox{if $t$ is even} \\[1mm] 
    \frac{11}{24} t + \frac{1}{4}, & \mbox{if $t$ is odd}
    \end{array}\right.%
  $
    \\ \hline
    %\begin{eqnarray*}
    \includegraphics[height=1cm]{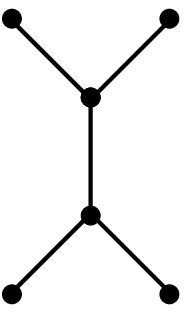} &%
    ${\small
    \frac{1}{240} t^5 + \frac{1}{24} t^4 + \,
    \left\{\begin{array}{ll}
    \frac{5}{24} t^3 + \frac{7}{12} t^2 +
    \frac{11}{10}t + 1,
    & \mbox{if $t$ is even} \\[1mm] 
    \frac{1}{6} t^3 + \frac{1}{3} t^2 +
    \frac{79}{240}t + \frac{1}{8},
    & \mbox{if $t$ is odd}
    \end{array}\right.%
    }
    $
    \\ \hline
    \includegraphics[height=1cm]{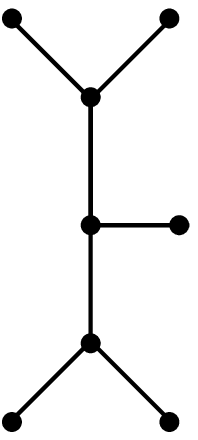} &%
    $
    {\tiny
    \frac{17}{40320}t^7 + \frac{17}{2880}t^6 + \,
    \left\{\begin{array}{ll}
    \frac{59}{1440} t^5 + \frac{25}{144} t^4 +
    \frac{179}{360} t^3 + \frac{173}{180} t^2 +
    \frac{93}{70}t + 1,   & \mbox{if $t$ is even} \\[1mm] 
    \frac{103}{2880} t^5 + \frac{35}{288} t^4 +
    \frac{1439}{5760} t^3 + \frac{893}{2880} t^2 +
    \frac{791}{3360}t + \frac{1}{16}, & \mbox{if $t$ is odd}
    \end{array}\right.
    }
    $
    \\ \hline
    \includegraphics[height=1cm]{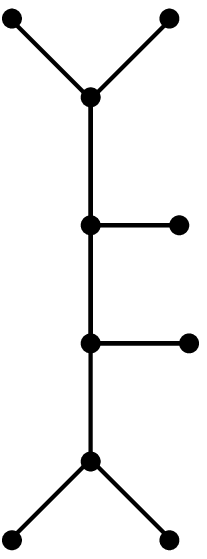} &%
    $
    {\tiny
    \frac{31}{725760}t^9 + \frac{31}{40320}t^8 + \,
    \left\{\begin{array}{ll}
    \frac{829}{120960}t^7 + \frac{37}{960}t^6 +
    \frac{653}{4320} t^5 + \frac{103}{240} t^4 +
    \frac{20413}{22680} t^3 + \frac{1723}{1260} t^2 +
    \frac{193}{126}t + 1,  & \mbox{if $t$ is even} \\[1mm] 
    \frac{43}{6912}t^7 + \frac{19}{640}t^6 +
    \frac{3181}{34560} t^5 + \frac{123}{640} t^4 +
    \frac{39205}{145152} t^3 + \frac{9923}{40320} t^2 +
    \frac{379}{2880}t + \frac{1}{32}, &
    \mbox{if $t$ is odd}
    \end{array}\right.
    }
    $
    \\ \hline
\end{tabular}
\end{center}
%So far we were not able to provide explicit formulas for such polynomials.

%%%%%%%%%%%%%%%%%%%%%%%%%%%%%%%%%%%%%%%%%%%%%%%%%%%%%%%%%%%%%%%%%%%%%%%%%%%%%%%%
Further generalizations to more general graphs
and connections with other topics as manifold invariants would also be of interest.

\section{Acknowledgements}\label{sec:acknowledgements}

We thank Tiago Royer and Fabr\'icio Caluza Machado for valuable comments.

\bibliographystyle{plain}
\bibliography{CubicQGraphs}

\end{document}